\documentclass[reqno,12pt]{amsart}

\usepackage[margin=2.5cm]{geometry}
\usepackage[T1]{fontenc}
\usepackage[utf8]{inputenc}
\usepackage[english]{babel}
\usepackage{amsmath,amssymb,amsthm,amsrefs}
\usepackage{ulem}
\usepackage[adobe-utopia]{mathdesign}

\usepackage{orcidlink}

\usepackage{enumitem}
\usepackage{mymacros}
\usepackage{hyperref}
\usepackage{xcolor}
\usepackage{orcidlink}
\usepackage{comment}
\usepackage{tikz}

\usepackage{centernot}
\usepackage{mathtools}

\title{Discrete Random Clark measures and associated inner functions}

\author[C. Bellavita]{Carlo Bellavita \orcidlink{0000-0003-3136-0271}}
\address[Carlo Bellavita]{Dipartimento di Matematica ``F. Enriques''\\
  Dipartimento di Eccellenza MUR 2023-2027\\
Universit\`a degli Studi di Milano\\
Via C. Saldini 50\\
I-20133 Milano}
\email{carlo.bellavita@unimi.it}

\author[N. Chalmoukis]{Nikolaos Chalmoukis \orcidlink{0000-0001-5210-8206}}
\address[Nikolaos Chalmoukis]{Dipartimento di Matematica e Applicazioni, Università
di Milano-Bicocca, via R. Cozzi 53, I-20125 Milano, Italy}
\email{nikolaos.chalmoukis@unimib.it}

\author[G. Lamberti]{Giuseppe Lamberti \orcidlink{0009-0009-0503-0421}}
\address[Giuseppe Lamberti]{Univ. Bordeaux, CNRS, Bordeaux INP, IMB, UMR 5251, F-33400 Talence, France} 
\email{giuseppe.lamberti@math.u-bordeaux.fr}

\thanks{The authors are members of Gruppo Nazionale per l'Analisi
Matematica, la Probabilità e le loro Applicazioni (GNAMPA) of Istituto Nazionale di Alta Matematica (INdAM). The second and third authors are partially supported by the grant ``INdAM-GNAMPA Project'', CUP E53C25002010001, ``Transferring Harmonic Analysis between Discrete Structures and Manifolds''. The second author is partially supported by the research grant “Yields of the ubiquity and the geometry of inner functions (YoungInFun)”, PID2024-160326NA-I00. The third author is partially supported by the project CHAT, ``Complex and Harmonic Analysis in Control Theory'' of the French National Research Agency, ANR-24-CE40-5470.}

\subjclass[2020]{Primary: 60G57; Secondary: 30J05}
\keywords{Clark measure; Random discrete measure; Inner function.}

\begin{document}

\begin{abstract}
We study a class of random inner functions $\varphi$ whose Clark measure at $1$ is the weighted sum of point masses supported on independent uniformly distributed points of $\mathbb T$. Our first result shows that $\varphi$ is almost surely a Blaschke product. We then investigate when \(\varphi\) admits angular derivative almost surely and we provide a \( 0 - 1\) law. These conditions have a direct interpretation in terms of the other Clark measures associated with $\varphi$.
Finally, we obtain quantitative estimates for the zeros of $\varphi$, proving that, in suitable regimes, their distribution satisfies summability conditions stronger than the classical Blaschke condition.
    
\end{abstract}

\maketitle

\section{Introduction and main results}

Let $\varphi: \mathbb{D} \to \mathbb{D}$ be an analytic self-map of the unit disk in the complex plane. For each $\alpha \in \mathbb{T} = \partial\mathbb{D}$, the \textit{Clark measure} $\sigma_\alpha = \sigma_{\varphi,\alpha}$ is defined via the Herglotz representation
\begin{equation*}
\text{Re} \left( \frac{\alpha + \varphi(z)}{\alpha - \varphi(z)} \right) = \int_{\mathbb{T}} \frac{1 - |z|^2}{|\zeta - z|^2} \, d \sigma_{\varphi,\alpha}(\zeta), \quad z \in \mathbb{D}.
\end{equation*}

Originally introduced by Clark \cite{Clark72} and further developed by Aleksandrov (see \cites{Ale87, Ale89}) these measures have been extensively studied, especially regarding their applications to perturbative operator theory \cites{Clark72, Simon1986,Simon1994}. For a detailed review of the basic properties of this family of measures, we refer the reader to \cites{BN2024,PS06,Saksman,Rosscauchy}.

In \cite{Huangsaksman}, the authors studied a random analytic self-map $\varphi$ defined via its Clark measure $\sigma_1$. In particular they considered $\sigma_1$ to be a Gaussian multiplicative chaos (GMC) measure and they proved that $\varphi$ is almost surely an infinite Blaschke product.

In this paper, we investigate a new random self-map of the unit disk defined via discrete random Clark measures. We now describe the construction in more detail. 
Let $(\vartheta_n)_{n\in\mathbb{N}}$ be a sequence of independent random variables, each uniformly distributed on $[0,2\pi)$, and set $\zeta_n(\omega)=e^{i\vartheta_n(\omega)}\in\mathbb{T}$. Given a sequence of non-negative weights $(c_n)_{n\in\mathbb{N}}$ satisfying the normalization condition $\sum_{n \in \bN} c_n=1$, we define the random measure
\begin{equation}\label{mu:def}
\mu=\sum_{n\in\mathbb{N}} c_n\,\delta_{\zeta_n}.
\end{equation}
This is a random measure in the following standard sense: $\mu$ is a measurable map from a probability space $\Omega$ to the space of finite non-negative measures on $\bT$, equipped with the $\sigma$-algebra generated by the evaluation maps $\nu\mapsto \nu(B)$ for Borel sets $B\subset\mathbb{T}$. Random measures of this type have been studied in various contexts (see, for instance, \cite{Kahane85}). Informally, $\mu$ is obtained by placing masses at points distributed uniformly on the unit circle. For background on random measures, we refer to \cite{Kallenberg83}. Finally, $\mu$ is singular with respect to Lebesgue measure on $\mathbb{T}$, whose normalized version we denote by $dm$.

We consider the random function $\varphi : \bD \to \bD$ defined such that its Clark measure at $\alpha = 1$ is $\mu$, i.e., $\mu=\sigma_{\varphi,1}$. Specifically, through standard computations \cite[Section 9]{Rosscauchy}, $\varphi$ is given by
\begin{equation}\label{phi:def}
    \varphi(z) = \frac{H\mu(z)-1}{H\mu(z)+1}, \quad \text{where} \quad H\mu(z) = \int_{\bT} \frac{\zeta+z}{\zeta-z} \, d\mu(\zeta). 
\end{equation}
It follows immediately that the random analytic self-map $\varphi$ is an inner function, which means that 
\[
|\varphi(e^{i\vartheta})| =\lim_{r \to 1} |\varphi(re^{i\vartheta})|=1
\]
for almost every $\vartheta \in [0,2\pi)$. Indeed, it is a classical fact \cite[Theorem 2.2]{Saksman} on Clark measures that $\varphi$ is an inner function if and only if the Clark measure $\sigma_{\varphi,1}$ is singular with respect to the Lebesgue measure.

Before presenting our results, we stress that our model is fundamentally different from the one introduced in \cite{Huangsaksman}. Although both GMC measures and countable weighted sums of Dirac masses are singular with respect to Lebesgue measure, the former are almost surely non-atomic (see \cite[Theorem 2.6]{RV14}), whereas the measure $\mu$ considered here is purely atomic, hence discrete.

\bigskip
We recall that any inner function $\varphi$ admits the factorization \cite[Theorem 5.5]{garnett06}
\[
\varphi(z) = B_Z(z)S_\eta(z),
\]
where $B_Z$ is a Blaschke product
\[
B_Z(z) = e^{ic} z^m \prod_{|z_n| \neq 0} \frac{|z_n|}{z_n} \frac{z_n - z}{1 - \overline{z_n}z}
\]
with $c \in \mathbb{R}$, $m \geq 0$, $Z=(z_n)_{n \in \bN}$ the sequence of zeros of $B_Z$ such that $\sum_{n \in \bN} (1 - |z_n|) < \infty$ and $S_\eta$ is a singular inner function
\[
S_\eta(z) = \exp \left( -\int_{\mathbb{T}} \frac{e^{i\vartheta} + z}{e^{i\vartheta} - z} \, d\eta(\vartheta) \right),
\]
where the measure $d\eta$ is positive and singular with respect to the Lebesgue measure.

Our first main result says that the random inner function $\varphi$ defined in \eqref{phi:def} does not have any singular factor.
\begin{thm} \label{THM1}
    Let $\mu$ be a random measure defined as in \eqref{mu:def}. Then the random inner function $\varphi$ defined as in \eqref{phi:def} is almost surely a Blaschke product.
\end{thm}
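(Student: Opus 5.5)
We will use a standard criterion relating the singular inner factor to the Clark measures: by Frostman-type arguments, an inner function $\varphi$ is a Blaschke product if and only if $\lim_{r\to1}\int_{\mathbb T}\log|\varphi(r\zeta)|\,dm(\zeta)=0$. An equivalent local form is that the singular part is absent if and only if, for $\eta$-a.e.\ point, $\varphi$ fails to tend radially to $0$. We will argue instead through the boundary behaviour of $H\mu$. The singular measure $\eta$ lives on the set where $\varphi$ has radial limit $0$, that is, where $H\mu(r\zeta)\to 1$. Since $\eta$ is singular, by a Frostman-type theorem we have: if $\varphi$ has a singular factor, then $\varphi$ has radial limit $0$ on a set $E$ with $\eta(E)>0$, and $E$ is contained in $\{\zeta:\lim_r H\mu(r\zeta)=1\}$.

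Our plan is therefore to show that almost surely the set $E_1=\{\zeta\in\mathbb T:\lim_{r\to1}H\mu(r\zeta)=1\}$ supports no nonzero measure of the required form. For this we use the point-mass structure: $\eta$ must satisfy $\lim_{r\to 1}(1-r)^{-1}\ldots$; more concretely, we use Aleksandrov's disintegration $m=\int_{\mathbb T}\sigma_\alpha\,dm(\alpha)$ together with the fact (Ahern--Clark) that $\varphi$ is a Blaschke product iff $\sigma_\alpha$ is discrete for a.e.\ $\alpha$. The cleaner route is the known characterization that $\varphi$ is a Blaschke product exactly when $\varphi_\alpha=(\varphi-\alpha)/(1-\bar\alpha\varphi)$ is Blaschke for all $\alpha$, which holds in particular when $\varphi$ itself is Blaschke.

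The cleanest concrete step is a direct computation of $\mathbb E\int_{\mathbb T}\log|\varphi(r\zeta)|\,dm$, or rather a proof that $\int\log|\varphi(r\zeta)|\,dm(\zeta)\to 0$ almost surely. We write
\[
\log|\varphi|=\log|H\mu-1|-\log|H\mu+1|.
\]
Since $\operatorname{Re}H\mu>0$, the term $\log|H\mu+1|\ge 0$ is harmonic, with mean value $\log 2$ for every $r$. Also $\log|H\mu-1|$ is subharmonic with mean at least $\log|H\mu(0)-1|=\log 0$, which is useless. We therefore exploit rotation invariance instead. For fixed $\zeta$, the law of $H\mu(r\zeta)$ does not depend on $\zeta$. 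Hence by Fubini
\[
\mathbb E\int\log|\varphi(r\zeta)|\,dm=\mathbb E\log|\varphi(r)|,
\]
and it suffices to show that $\mathbb E\log\frac{1}{|\varphi(r)|}\to 0$. Because $r\mapsto\int\log|\varphi(r\cdot)|\,dm$ is increasing, monotone convergence then upgrades this to almost sure convergence.

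The key estimate, and the main obstacle, is to control $\mathbb P(|H\mu(r)-1|<t)$ uniformly as $r\to1$. Write $H\mu(r)=\sum_n c_n\frac{\zeta_n+r}{\zeta_n-r}$. As $r\to1$, each summand is a Cauchy-type variable: its real part is the Poisson kernel and its imaginary part is the conjugate kernel, and these converge in law to $i\cot(\vartheta_n/2)$, which has density. We will condition on all $\zeta_n$ except one index $n_0$ with $c_{n_0}>0$. Then $H\mu(r)$ equals a fixed number plus $c_{n_0}\frac{\zeta+r}{\zeta-r}$, where $\zeta$ is uniform. The map $\zeta\mapsto\frac{\zeta+r}{\zeta-r}$ sends $m$ to the harmonic measure of the circle $\{\operatorname{Re}w=\ldots\}$, which is a Cauchy-type law with bounded, integrable-logarithm structure. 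This gives a uniform bound of the form $\mathbb E\log^-|H\mu(r)-1|\le C$, and hence uniform integrability. Finally, pointwise $|\varphi(r)|\to1$ in probability, because $H\mu(r)$ tends to $\infty$ or to the imaginary axis away from $1$. Combining uniform integrability of $\log|\varphi(r)|$ (on an event where it is slightly improved) with this convergence in probability yields $\mathbb E\log|\varphi(r)|\to0$, and therefore $\varphi$ is almost surely a Blaschke product.
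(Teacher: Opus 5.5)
Your final argument (from ``The cleanest concrete step'' on) is correct in outline and has the same skeleton as the paper. The rotation-invariance, Fubini, monotonicity and uniform-integrability argument you write out is essentially the content of the Huang--Saksman lemma the paper cites; its hypothesis $a>1$ is there to give uniform integrability. Your convergence $|\varphi(r)|\to1$ in probability is Proposition~\ref{Prop3.1}.

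The genuine difference is the anti-concentration step. The paper discards the real part and bounds the characteristic function of $\operatorname{Im}H\mu(r)$ by the single factor $|\phi_0(2rc_0\lambda)|$. It gets $|\phi_0(\lambda)|\lesssim|\lambda|^{-1/2}$ from Van der Corput on an $r$-dependent splitting of $[0,\pi]$, and then Esseen gives $\mathbb{P}(|\operatorname{Im}H\mu(r)|\le\varepsilon)\lesssim\sqrt{\varepsilon}$.

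Your conditioning on $(\zeta_n)_{n\neq n_0}$ also uses only one atom, but it works directly in the complex plane, and it can be made fully explicit:
\begin{itemize}
\item $W=\frac{\zeta+r}{\zeta-r}$ lives on the circle with diameter $[\frac{1-r}{1+r},\frac{1+r}{1-r}]$, not on a line $\{\operatorname{Re}w=\mathrm{const}\}$.
\item Its law has density $\frac{r}{\pi|w-1|^2}=\frac{|\zeta-r|^2}{4\pi r}\le\frac{(1+r)^2}{4\pi r}$ with respect to arc length.
\item A circle meets a disc of radius $s$ in an arc of length at most $2\pi s$.
\end{itemize}
Hence, uniformly in the conditioned value and in $r\ge\delta$,
\[
\mathbb{P}\bigl(|H\mu(r)-1|<t\bigr)\le\frac{2t}{c_{n_0}r}.
\]
This is more elementary than the paper's route and gives a linear rather than square-root bound. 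The square root in the paper is the price of projecting onto $\operatorname{Im}H\mu$, whose one-atom law has inverse-square-root singularities at the critical values of $u_r$. The Fourier route's main merit is that it is a standard template that could exploit the product over many atoms, which the paper does not need.

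Three things must be repaired.

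First, ``a uniform bound $\mathbb{E}\log^-|H\mu(r)-1|\le C$, and hence uniform integrability'' is a non sequitur. A bounded first moment does not give uniform integrability, which is exactly why the cited lemma asks for $a>1$. You need the tail estimate above, which gives $\mathbb{P}\bigl(\log^+\frac{1}{|H\mu(r)-1|}>u\bigr)\le\frac{2e^{-u}}{c_{n_0}r}$, hence uniformly bounded moments of every order. Combined with $0\le\log\frac{1}{|\varphi(r)|}\le\log 3+\log^+\frac{1}{|H\mu(r)-1|}$, this yields uniform integrability of $\log\frac{1}{|\varphi(r)|}$.

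Second, ``$H\mu(r)$ tends to $\infty$ or to the imaginary axis'' at a \emph{fixed} point needs a reason. Argue as in Proposition~\ref{Prop3.1}: every realization is inner, then use Fubini and rotation invariance. Alternatively, run Vitali on $\Omega\times\mathbb{T}$, where almost-everywhere convergence is automatic.

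Third, delete the auxiliary claims in your first two paragraphs; they are unused and false.
\begin{itemize}
\item Being a Blaschke product is not equivalent to $\sigma_\alpha$ being discrete for a.e.\ $\alpha$. The singular function $\exp(\frac{z+1}{z-1})$ has all Clark measures discrete. Conversely, Theorem~\ref{THM1} with Corollary~\ref{THM3}(ii) produces Blaschke products whose Clark measures are a.e.\ continuous.
\item A Blaschke product can have a singular Frostman shift. If $S$ is singular inner and $\varphi=\frac{S-a}{1-\bar aS}$ is Blaschke, then $\frac{\varphi+a}{1+\bar a\varphi}=S$.
\end{itemize}
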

In the deterministic setting, Blaschke products are dense in the class of all inner functions (see \cite[Theorem 6.4 and Corollary 6.5]{garnett06}). Moreover, our result combined with \cite[Theorem 1]{Huangsaksman} suggests that, in the probabilistic setting where one focuses on almost sure behavior, singular inner functions are atypical and appear only on events of probability zero.
Furthermore, we highlight that in general it is quite difficult to understand many properties of an inner function $\varphi$ knowing only one of its Clark measures. We point out that there exist singular inner functions whose associated Clark measures are infinite sums of Dirac masses, one such example is $\varphi(z) = \exp\left(\frac{z+1}{z-1}\right)$, see \cite{Saksman}.

\bigskip
Let $\varphi$ be an analytic self-map of the unit disk $\mathbb{D}$. With the symbol $\angle \lim_{z \to \zeta}$, we denote the non-tangential limit, that is, $\lim_{z \to \zeta}$ where $z$ is taken in the \textit{Stoltz region} $\Gamma_{\gamma}(\zeta)=\{w \in \bD: |w-\zeta|<\gamma(1-|w|)\}$, with $\gamma >1$. We say that $\varphi$ has \textit{angular derivative} (in the sense of Carathéodory) at a point $\zeta \in \mathbb{T}$ if the following two conditions hold:

\begin{enumerate}[label=(\roman*)]
    \item The non-tangential limit $\kappa = \angle \lim_{z \to \zeta} \varphi(z)$ exists and belongs to the boundary $\mathbb{T}$.
    \item The non-tangential limit $\angle \lim_{z \to \zeta} \varphi'(z)$ exists (and it is necessarily finite).
\end{enumerate}

While the existence of the angular derivative is guaranteed if $\varphi$ extends analytically across $\zeta$ with $|\varphi(\zeta)| = 1$, the general case is more subtle. Ahern and Clark established a criterion for the existence of angular derivatives for inner functions (see for instance \cites{AC1970,AC1974}). These results were later extended to general bounded analytic functions \cite{FM2008}.

\medskip

Regarding the existence of the angular derivative of the random inner function $\varphi$ defined through \eqref{phi:def} we established the following result.

\begin{thm} \label{THM2}
Let $\varphi$ be the random inner function defined as in \eqref{phi:def}.
\begin{enumerate}[label=(\roman*)]
    \item Suppose that $\sum c_n^{1/2} < \infty$. Then, for every fixed $\xi \in \bT$, the random inner function $\varphi$ admits almost surely angular derivative at $\xi$.
    \item Suppose that $\sum c_n^{1/2} =\infty$. Then, for every fixed $\xi \in \bT$, the random inner function $\varphi$ admits almost never angular derivative at $\xi$.
\end{enumerate}
\end{thm}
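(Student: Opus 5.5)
The plan is to reduce the existence of an angular derivative at a fixed $\xi$ to a condition on the single Clark measure $\sigma_{\varphi,1}=\mu$, and then check that condition with a series test for independent non-negative random variables. I will use the classical Clark/Aleksandrov description of angular derivatives through Clark measures; see, e.g., \cite{Saksman}, \cite{PS06}, or \cite[Section 9]{Rosscauchy}. For an analytic self-map $\varphi$ and $\xi\in\mathbb T$ there are two cases.
\begin{itemize}
\item $\varphi$ has angular derivative at $\xi$ with $\angle\lim_{z\to\xi}\varphi(z)=1$ if and only if $\sigma_{\varphi,1}(\{\xi\})>0$. In that case $|\varphi'(\xi)|=1/\sigma_{\varphi,1}(\{\xi\})$.
\item $\varphi$ has angular derivative at $\xi$ with boundary value $\alpha\neq 1$ if and only if $\sigma_{\varphi,1}(\{\xi\})=0$ and
\[
\int_{\mathbb T}\frac{d\sigma_{\varphi,1}(\zeta)}{|\zeta-\xi|^2}<\infty .
\]
\end{itemize}
I would quote the second case, or rederive it from \eqref{phi:def}. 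Writing $1-\varphi=2/(H\mu+1)$, the finiteness of the integral is exactly what makes $H\mu$ have a finite non-tangential limit at $\xi$, with purely imaginary real part zero, together with a finite angular derivative. By the Julia–Carathéodory theorem this is equivalent to $\varphi$ having an angular derivative with boundary value different from $1$.

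Fix $\xi$. The event that $\mu$ has an atom at $\xi$ is $\bigcup_n\{\zeta_n=\xi\}$, which has probability zero because each $\zeta_n$ is uniformly distributed. Hence, almost surely, $\varphi$ has angular derivative at $\xi$ if and only if
\[
S=\sum_{n} X_n<\infty,\qquad X_n:=\frac{c_n}{|\zeta_n-\xi|^2}.
\]
The $X_n$ are independent and non-negative. By Kolmogorov's three-series theorem, specialised to non-negative summands, $S<\infty$ almost surely if $\sum_n \mathbb E[\min(X_n,1)]<\infty$, and $S=\infty$ almost surely otherwise. This is the $0$–$1$ law of the statement, since the event $\{S<\infty\}$ is a tail event.

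It remains to compute $\mathbb E[\min(X_n,1)]$. By rotation invariance I may take $\xi=1$ and use $|e^{i\vartheta}-1|\asymp|\vartheta|$ for $|\vartheta|\le\pi$. The event $\{X_n>1\}$ is $\{|\zeta_n-1|<c_n^{1/2}\}$, which has probability $\asymp c_n^{1/2}$; when $c_n$ is not small this is still $\lesssim 1\lesssim c_n^{1/2}$. On the complementary event,
\[
\mathbb E\bigl[X_n\mathbf 1_{X_n\le 1}\bigr]\asymp c_n\int_{c_n^{1/2}\lesssim|\vartheta|\le\pi}\frac{d\vartheta}{\vartheta^2}\asymp c_n^{1/2}.
\]
Hence $\mathbb E[\min(X_n,1)]\asymp c_n^{1/2}$, with the lower bound for all small $c_n$; this is the relevant case, since $c_n\to0$. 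Therefore $S<\infty$ almost surely when $\sum c_n^{1/2}<\infty$, which gives (i). And $S=\infty$ almost surely when $\sum c_n^{1/2}=\infty$, which gives (ii).

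The main obstacle is the first step: getting the precise equivalence between "angular derivative at $\xi$" and "atom at $\xi$ or finite $\int|\zeta-\xi|^{-2}d\mu$". For (ii) the converse direction is essential, namely that an angular derivative with boundary value $\alpha\ne1$ forces the integral to be finite. I would prove this via Julia's lemma applied to $H\mu$, using the identity
\[
\frac{1-|\varphi(z)|^2}{|1-\varphi(z)|^2}=\operatorname{Re}H\mu(z)
\]
together with Fatou's lemma along the radius ending at $\xi$.
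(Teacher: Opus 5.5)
Your proposal is correct and follows essentially the same route as the paper. You reduce via the Clark-measure characterization (plus the fact that $\mu$ a.s.\ has no atom at $\xi$) to a.s.\ convergence of $\sum_n c_n|\zeta_n-\xi|^{-2}$, then apply Kolmogorov's three-series theorem with truncation level $1$, using $\mathbb{P}(X_n>1)\asymp c_n^{1/2}$ and $\mathbb{E}[X_n\mathbf{1}_{\{X_n\le 1\}}]\asymp c_n^{1/2}$. The differences are only packaging: your non-negative form $\sum_n\mathbb{E}[\min(X_n,1)]<\infty$ makes the paper's variance check unnecessary and the $0$--$1$ law explicit through the tail event, and you sketch the Julia--Fatou proof of the converse direction of the Clark criterion, which the paper simply quotes from Saksman.
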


\bigskip

An important consequence of Theorem \ref{THM2} is that it yields information on the remaining Clark measures associated with the random inner function \(\varphi\). Recall that, by the Lebesgue decomposition theorem, each Clark measure \(\sigma_\alpha\) admits a decomposition of the form
\[
\sigma_\alpha = \eta_\alpha + d_\alpha,
\]
where \(\eta_\alpha\) is singular continuous and \(d_\alpha\) is purely atomic (discrete). We can then state the following corollary.

\begin{cor}\label{THM3}
    Let $\varphi$ be the random inner function defined as in \eqref{phi:def}, and for $\alpha \in \bT$, let $\sigma_\alpha$ be its Clark measure in $\alpha$.
    \begin{enumerate}[label=(\roman*)]
        \item Suppose that $\sum c_n^{1/2} < \infty$. Then, almost surely, we have that $\sigma_\alpha=d_\alpha$ for $m- $a. e. $\alpha \in \mathbb{T}. $ 
        \item Suppose that $\sum c_n^{1/2}= \infty$. Then, almost surely,  we have that $\sigma_\alpha=\eta_\alpha$ for $m- $ a.e. $\alpha \in \mathbb{T} \setminus \{1 \} $.
    \end{enumerate}
\end{cor}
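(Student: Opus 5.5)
Both parts follow from Theorem \ref{THM2} combined with two classical facts about Clark measures, namely Aleksandrov's disintegration theorem and the Ahern--Clark/Sarason description of atoms. The bridge between them is a Fubini argument in the pair $(\omega,\xi)$.

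The first fact is the atom criterion. For $\alpha\in\mathbb{T}$ and $\xi\in\mathbb{T}$, the Clark measure $\sigma_\alpha$ has an atom at $\xi$ if and only if $\varphi$ has an angular derivative at $\xi$ with $\angle\lim_{z\to\xi}\varphi(z)=\alpha$. In that case $\sigma_\alpha(\{\xi\})=1/|\varphi'(\xi)|$. The second fact is Aleksandrov's disintegration formula:
\[
\int_{\mathbb{T}}\sigma_\alpha(E)\,dm(\alpha)=m(E)
\]
for Borel sets $E\subset\mathbb{T}$. We also use that for an inner function the measures $\sigma_\alpha$ are singular and mutually singular for distinct $\alpha$. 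Moreover, $\sigma_\alpha$ is carried by the set $\{\xi:\angle\lim\varphi(\xi)=\alpha\}$.

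\textbf{Step 1 (Fubini).} Let $A=\{(\omega,\xi):\varphi_\omega \text{ has an angular derivative at } \xi\}$; this set is jointly measurable, since angular derivatives can be expressed via countable limits along radii. By Theorem \ref{THM2}, each $\xi$-section of $A$ has probability one in case (i) and probability zero in case (ii). By Fubini, almost surely the set $D_\omega=\{\xi:\varphi_\omega \text{ has an angular derivative at } \xi\}$ has full $m$-measure in case (i) and $m$-measure zero in case (ii).

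\textbf{Step 2 (case (i)).} Fix $\omega$ with $m(\mathbb{T}\setminus D_\omega)=0$. Apply the disintegration formula with $E=\mathbb{T}\setminus D_\omega$ to get $\int\sigma_\alpha(\mathbb{T}\setminus D_\omega)\,dm(\alpha)=0$. Hence for $m$-a.e.\ $\alpha$ the measure $\sigma_\alpha$ is carried by $D_\omega$. On $D_\omega$ we have $|\varphi(\xi)|=1$, so $\sigma_\alpha$ restricted to $D_\omega$ is carried by $D_\omega\cap\{\varphi(\xi)=\alpha\}$. Every point of this set is an atom of $\sigma_\alpha$ by the atom criterion.

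It remains to exclude a continuous part. Any continuous component of $\sigma_\alpha$ would give zero mass to each point but live on $D_\omega\cap\varphi^{-1}(\alpha)$. The standard fact is that $\sigma_\alpha$ restricted to the set where $\varphi$ has a finite angular derivative equals $\sum 1/|\varphi'(\xi)|\,\delta_\xi$. This holds because the Poisson integral identity $\frac{1-|\varphi(z)|^2}{|\alpha-\varphi(z)|^2}$ evaluated along radii at points of $D_\omega$ forces $\sigma_\alpha$ to have finite upper symmetric derivative relative to $\delta$-masses only. Concretely, I would invoke the known result that the singular continuous part of $\sigma_\alpha$ is concentrated where $|\varphi'|=\infty$ (the angular derivative fails). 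So $\eta_\alpha(D_\omega)=0$, giving $\sigma_\alpha=d_\alpha$.

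\textbf{Step 3 (case (ii)).} Fix $\omega$ with $m(D_\omega)=0$. The atoms of $\sigma_\alpha$, for all $\alpha$ together, lie in $D_\omega$. Since $\sigma_\alpha$ and $\sigma_\beta$ have disjoint atoms for $\alpha\neq\beta$, only countably many... This counting does not suffice directly, so instead apply disintegration with $E=D_\omega$: $\int\sigma_\alpha(D_\omega)\,dm(\alpha)=m(D_\omega)=0$. Hence $d_\alpha(\mathbb{T})\le\sigma_\alpha(D_\omega)=0$ for $m$-a.e.\ $\alpha$, and $\sigma_\alpha=\eta_\alpha$. Excluding $\alpha=1$ is harmless, since it is an $m$-null set, and $\sigma_1=\mu$ is purely atomic.

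\textbf{Main obstacle.} The main obstacle is the justification in Step 2 that $\sigma_\alpha$ has no continuous mass on points where a finite angular derivative exists. I would prove it via the Julia--Carathéodory inequality. At such $\xi$, the quantity $\frac{1-|\varphi(r\xi)|^2}{(1-r^2)|\alpha-\varphi(r\xi)|^2}$ is bounded as $r\to1$ if $\varphi(\xi)=\alpha$. On the other hand, it dominates $\frac{1}{1-r^2}\int P_{r\xi}\,d\sigma_\alpha$. That integral in turn dominates the ratio $\sigma_\alpha(I)/|I|^2$ on arcs $I$ of length $1-r$ around $\xi$. This makes the set of such $\xi$ carrying continuous mass $\sigma_\alpha$-null by a standard density/covering argument.

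The measurability of $A$ is routine.
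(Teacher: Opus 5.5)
Your overall architecture is the paper's: a Fubini argument in $(\omega,\xi)$ turns Theorem~\ref{THM2} into $m(\mathbb{T}\setminus D_\omega)=0$ (resp.\ $m(D_\omega)=0$) almost surely, followed by Aleksandrov's disintegration and the Clark atom criterion. Your Step~3 for part (ii) is correct and is what the paper means by ``reason analogously''. The problem is the paragraph you label as the main obstacle, because the argument you propose there is false. If $\varphi$ has an angular derivative at $\xi$ with $\angle\lim_{z\to\xi}\varphi(z)=\alpha$, then Julia--Carath\'eodory gives $|\alpha-\varphi(r\xi)|\sim|\varphi'(\xi)|(1-r)$ and $(1-|\varphi(r\xi)|^2)/(1-r^2)\to|\varphi'(\xi)|>0$. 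Hence
\[
\frac{1-|\varphi(r\xi)|^2}{(1-r^2)\,|\alpha-\varphi(r\xi)|^2}\sim\frac{1}{|\varphi'(\xi)|\,(1-r)^2}\longrightarrow\infty ,
\]
so this quantity is not bounded. Your argument also proves too much. A bound $\sigma_\alpha(I)\lesssim|I|^2$ on arcs shrinking to $\xi$ would force $\sigma_\alpha(\{\xi\})=0$, which contradicts the atom criterion you invoked one line earlier. The phrase about a ``finite upper symmetric derivative relative to $\delta$-masses'' also does not describe a meaningful property.

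No density or covering argument is needed, and you already have the key fact in hand. Fix $\alpha$ with $\sigma_\alpha(\mathbb{T}\setminus D_\omega)=0$. Since $\sigma_\alpha$ is carried by the set where the (radial or nontangential) limit of $\varphi$ equals $\alpha$, it is carried by
\[
F_\alpha=\{\xi\in D_\omega:\angle\lim_{z\to\xi}\varphi(z)=\alpha\}.
\]
By the atom criterion, every $\xi\in F_\alpha$ satisfies $\sigma_\alpha(\{\xi\})=1/|\varphi'(\xi)|>0$. A finite measure has at most countably many atoms, so $F_\alpha$ is countable. A measure carried by a countable set is purely atomic, so $\eta_\alpha=0$; in fact $\sigma_\alpha=\sum_{\xi\in F_\alpha}|\varphi'(\xi)|^{-1}\delta_\xi$. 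This countability observation is the reasoning behind the paper's terse ``Consequently''. It is also the proof of the ``known result'' you wanted to cite, namely that the continuous part of $\sigma_\alpha$ lives where the angular derivative fails. If you replace your last paragraph with this argument, the proposal is complete.
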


It is useful to recall that Clark measures arise naturally as spectral measures of the associated \textit{Clark operators}. Let \(\varphi\) be an inner function with \(\varphi(0)=0\), and consider the model space \(K_\varphi = H^2 \ominus \varphi H^2\), where \(H^2\) is the classical Hardy space. Denote by
\[
S_\varphi = P_{K_\varphi}\, S\!\restriction_{K_\varphi}
\]
the compression to \(K_\varphi\) of the unilateral shift \(S\), given by \((Sf)(z)=zf(z)\). For each \(\alpha \in \mathbb{T}\), define the rank-one unitary perturbation
\begin{equation}
U_\alpha = S_\varphi + \alpha \,\langle \cdot, S^*\varphi  \rangle \,\mathbf{1}
\end{equation}
acting on \(K_\varphi\) , where \(S^*\) is the backward shift operator and \(\mathbf{1}\) is the unit constant function. The operator \(U_\alpha\) is unitary and cyclic \cite[Theorem 11.4]{Rossmodel}. Moreover, its point spectrum consists of those \(\zeta \in \mathbb{T}\) for which \(\varphi(\zeta)=\alpha\) and the angular derivative satisfies \(|\varphi'(\zeta)|<\infty\). A standard result (see, e.g., \cite{Clark72}) asserts that the spectral measure of \(U_\alpha\) coincides with the Clark measure \(\sigma_\alpha\). In this way, Corollary \(\ref{THM3}\) can be viewed as describing spectral properties of random Clark operators associated to $\varphi$.

\bigskip
It is natural to investigate the behavior of the sequence of zeros $\Lambda=(\lambda_k)_{k \in \bN}$ of the random Blaschke product $\varphi$. In \cite{Huangsaksman}, the authors established a sharp condition on the exponent $\beta$ ensuring the convergence of the sum $\sum_k (1-|\lambda_k|)^\beta$. It is worth noting that, in that setting the above sum diverges for $\beta=3/4$, whereas in our framework convergence may occur. More precisely, we prove the following theorem.

\begin{thm}\label{THM4}
    Let $\varphi$ be the random inner function defined as in \eqref{phi:def}, $\Lambda=(\lambda_k)_{k \in \bN}$ the sequence of its zeros and $1/2 \leq \alpha <1$.  If $(c_n)_{n \in \bN} \in \ell^\alpha$, then for every $\varepsilon >0$,
    \[
    \sum_{k \in \bN} (1-|\lambda_k|)^{\alpha + \varepsilon} < \infty \quad \text{almost surely.}
    \]
\end{thm}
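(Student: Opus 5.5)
Our plan is to reduce the zero-counting to an estimate on the real part of $H\mu$. The zeros of $\varphi$ are exactly the points where $H\mu(z)=1$. Writing $H\mu = P\mu + i\,Q\mu$, with $P\mu$ the Poisson integral of $\mu$ and $Q\mu$ its conjugate, every zero $\lambda$ satisfies $P\mu(\lambda)=1$. Since $P\mu(z)\ge c_n\frac{1-|z|^2}{|\zeta_n-z|^2}$ for each $n$, and the Poisson kernel is small away from the atoms, this says that each zero must lie in a region controlled by the atoms. We will compare $\sum_k(1-|\lambda_k|)^{\beta}$, with $\beta=\alpha+\varepsilon$, to a sum over the atoms via Jensen-type counting in Whitney boxes.

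Concretely, we plan to use the standard Littlewood--Paley/Jensen identity. For an inner (Blaschke, by Theorem \ref{THM1}) function,
\[
\sum_k (1-|\lambda_k|)^{\beta}\lesssim \int_{\bD}(1-|z|)^{\beta-2}\log\frac{1}{|\varphi(z)|}\,dA(z)+C,
\]
which follows by integrating Jensen's formula against $(1-r)^{\beta-2}dr$ (valid for $0<\beta<1$ after summing the dyadic counts). Using $\log\frac1{|\varphi|}\asymp 1-|\varphi|^2$ where $|\varphi|$ is away from $0$, and the identity
\[
1-|\varphi(z)|^2=\frac{4P\mu(z)}{|H\mu(z)+1|^2}\le \frac{4P\mu(z)}{(1+P\mu(z))^2}\le \min\bigl(4P\mu(z),\,4/P\mu(z)\bigr),
\]
we obtain the bound $1-|\varphi|^2\le 4\min(1,P\mu)$. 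The region where $|\varphi|$ is small, near the zeros, needs separate care, because there $\log(1/|\varphi|)$ is unbounded but only logarithmically integrable. We handle it by working instead with the counting function $n(r)$ and with Jensen on discs centered at Whitney-box centers, so that only $\min(1,P\mu)$ at comparable points matters. The task then reduces to showing that
\[
\mathbb E\int_{\bD}(1-|z|)^{\beta-2}\min\bigl(1,P\mu(z)\bigr)\,dA(z)<\infty .
\]

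Next we use subadditivity. Since $\alpha<1$, we have $\min(1,\sum a_n)\le\sum a_n^{\alpha}$ for $a_n\ge0$. Hence $\min(1,P\mu)\le\sum_n c_n^{\alpha}P_{\zeta_n}(z)^{\alpha}$, where $P_\zeta(z)=\frac{1-|z|^2}{|\zeta-z|^2}$. By rotation invariance, $\mathbb E\,P_{\zeta_n}(z)^{\alpha}=\int_{\bT}P_\zeta(z)^{\alpha}\,dm(\zeta)\asymp(1-|z|)^{1-\alpha}$ for $\alpha>1/2$, and this carries a log factor at $\alpha=1/2$. We then get
\[
\mathbb E\int(1-|z|)^{\beta-2}\min(1,P\mu)\,dA\lesssim\sum_n c_n^{\alpha}\int_0^1(1-r)^{\beta-2+1-\alpha}\log\tfrac{e}{1-r}\,dr,
\]
which is finite because $\beta-\alpha=\varepsilon>0$ and $(c_n)\in\ell^{\alpha}$. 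Markov's inequality then gives almost-sure finiteness.

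The main obstacle is the first step: justifying that the Blaschke-type sum is dominated by $\int(1-|z|)^{\beta-2}\min(1,P\mu)\,dA$ despite the logarithmic singularity of $\log(1/|\varphi|)$ at the zeros. We would try a Whitney decomposition. For a box $Q$ with center $z_Q$ and side $\ell$, the number of zeros in $Q$ is bounded by Jensen on a disc of radius $\asymp\ell$ around a point $w\in Q$, giving roughly $\log\frac{1}{|\varphi(w)|}$. We would then average over $w$ in the box and use $\log\frac1{|\varphi|}\lesssim 1+\log\frac{1}{|\varphi|}\,\mathbf 1_{|\varphi|<1/2}$, with the small-value set having controlled logarithmic integral by the same Jensen bound. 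If this is too delicate, a cruder fallback uses Harnack: $P\mu$ is comparable on the doubled box, so either $P\mu\ll1$ on the box, in which case there are no zeros, or the box is charged by $\min(1,P\mu(z_Q))\gtrsim$ const. It then remains to bound the number of zeros per charged box by a constant, using the fact that $H\mu-1$ has bounded valence locally after normalization. This valence bound is where we expect most of the work.
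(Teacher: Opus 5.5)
Your plan has a genuine gap, and it is the step you flag yourself: bounding $\sum_k(1-|\lambda_k|)^{\beta}$ by $\int_{\mathbb D}(1-|z|)^{\beta-2}\min(1,P\mu)\,dA$ plus a constant. This inequality is false as a deterministic statement, so neither of your two routes can close it.

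\emph{The fallback fails.} Take $\varphi(z)=z\prod_{j=1}^{N}\frac{a_j-z}{1-\overline{a_j}z}$ with all $a_j$ in one Whitney box $Q$ of size $\ell$. Its Clark measure at $1$ is a purely atomic probability measure, yet $Q$ contains $N$ zeros, so the claim that a charged box carries boundedly many zeros is wrong. With $N\approx 1/\ell$ the zero sum is $\gtrsim \ell^{\beta-1}$. On the other hand, $\int(1-|z|)^{\beta-2}\min(1,P\mu)\,dA\lesssim\sum_n c_n^{\gamma}$ for any $\gamma\in(1/2,\beta)$, and one checks this stays bounded in the example.

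\emph{The Whitney-averaging route is circular.} The contribution of $\{|\varphi|<1/2\}$ to $\int_Q\log\frac{1}{|\varphi|}\,dA$ is itself of the size of the number of zeros near $Q$. Controlling it ``by the same Jensen bound'' therefore returns the quantity you are trying to estimate.

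The logarithmic singularity of $\log\frac1{|\varphi|}$ is exactly what records multiplicity and clustering of zeros, and $\min(1,P\mu)$ cannot see it. Every random input in your plan is a first-moment bound on $P\mu$. What is missing is a probabilistic statement about how rarely $H\mu(z)$ comes very close to $1$, that is, an anti-concentration (small-ball) estimate.

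The paper supplies exactly this ingredient.
\begin{itemize}
\item It writes $\log\frac{1}{|\varphi|^2}=\log\bigl(1+\frac{4P\mu}{|H\mu-1|^2}\bigr)$ and passes to the distribution function, aiming for the pointwise bound $\mathbb E\log\frac{1}{|\varphi(z)|}\lesssim(1-|z|)^{1-\alpha-\varepsilon}$.
\item For moderate levels it uses Markov with the $\alpha$-moment, which is essentially your subadditivity computation.
\item For high levels, where $P\mu(r)$ must fall in a tiny interval $I$ around $1$, it uses Ess\'een's concentration inequality with oscillatory decay of the characteristic function. This gives $\mathbb P(P\mu(r)\in I)\lesssim(|I|/(1-r))^{1/2}$.
\item Fubini and Lemma~\ref{lem:saksman} then finish, with no Whitney boxes or valence counting.
\end{itemize}

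Your own splitting can be repaired in the same spirit.
\begin{itemize}
\item On $\{P\mu(z)<1/2\}$ one has $\log\frac{1}{|\varphi|}\lesssim P\mu\le\min(1,P\mu)$.
\item On $\{P\mu(z)\ge 1/2\}$ one has $\log\frac{1}{|\varphi|}\lesssim 1+\log^+\frac{1}{|\operatorname{Im}H\mu(z)|}$. All moments of this are bounded uniformly for $\delta\le|z|<1$, by the small-ball bound $\mathbb P(|\operatorname{Im}H\mu(z)|\le s)\lesssim\sqrt{s}$ from the proof of Theorem~\ref{THM1}.
\item H\"older then bounds this contribution by $C_q\,\mathbb P(P\mu(z)\ge 1/2)^{1/q}\lesssim C_q\bigl((1-|z|)^{1-\alpha}\log\frac{e}{1-|z|}\bigr)^{1/q}$.
\end{itemize}
Taking $q$ close to $1$ costs only part of the $\varepsilon$ that the statement allows.
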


We prove a partial counterpart of Theorem \ref{THM4}. After reindexing the pairs \((c_n,\zeta_n)\), we may assume without loss of
generality that the weights \((c_n)_{n \in \mathbb{N}}\) are strictly positive and
decreasing. This does not change the associated random measure, and hence it
gives rise to the same random inner function \(\varphi\).

\begin{thm}\label{Prop1}
	Let $\varphi$ be the random inner function defined as in \eqref{phi:def} and $\Lambda=(\lambda_k)_{k \in \bN}$ the sequence of its zeros. Suppose that \(c_n\) is a strictly positive, decreasing sequence such that
	\begin{equation} \label{log-regularity} \tag{LR}
	\lim_{n \to \infty} \frac{-\log c_n}{\log n} = p,
	\end{equation}
	with $p\geq 1$. Then for every $\varepsilon >0$
	\[
	\sum_{k \in \bN} (1-|\lambda_k|)^{2-p-\varepsilon} = \infty \quad \text{almost surely.}
	\]
\end{thm}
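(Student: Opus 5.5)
The plan is to locate, for a positive proportion of indices $n$, a zero $\lambda$ of $\varphi$ at distance comparable to $c_n$ from the atom $\zeta_n$. Under \eqref{log-regularity} this forces $\sum_k(1-|\lambda_k|)^{s}$ to diverge for $s=2-p-\varepsilon$. Indeed, if $s\le 0$, divergence only needs infinitely many zeros, which the construction gives. If $s>0$, then $c_n^{s}\ge n^{-p(2-p-\varepsilon/2)}$ for large $n$ (after shrinking $\varepsilon$), and $p(2-p-\varepsilon/2)<1$ because $p(2-p)\le 1$ for $p\ge1$. So any subsequence of positive lower density makes the sum diverge.

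\emph{Step 1 (deterministic localization).} By \eqref{phi:def}, $\varphi(z)=0$ if and only if $H\mu(z)=1$. Write
\[
H\mu(z)=c_n\frac{\zeta_n+z}{\zeta_n-z}+R_n(z), \qquad R_n(z)=\sum_{m\neq n}c_m\frac{\zeta_m+z}{\zeta_m-z}.
\]
If $R_n$ is essentially constant, $R_n\approx r_n$, on the disk $D_n=D(\zeta_n(1-c_n),Kc_n)$, the equation becomes
\[
\zeta_n-z\approx \frac{2c_n\zeta_n}{1-r_n}.
\]
Suppose $|1-r_n|\le K$ and $|\arg(1-r_n)|\le \pi/2-\delta$ (note $\operatorname{Re} R_n\ge 0$; the real requirement is that $\operatorname{Re} r_n$ be bounded and $\operatorname{Im} r_n$ not huge). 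Then this approximate solution lies in $\bD$ with $1-|z|\asymp c_n$. I then apply Rouché on a small disk around it. This needs $\sup_{D_n}|R_n-r_n|$ to be small compared with $1-r_n$, which follows from the derivative bound
\[
c_n\sup_{D_n}|R_n'|\le c_n\sum_{m\ne n}\frac{2c_m}{\operatorname{dist}(\zeta_m,D_n)^2}\le \eta .
\]
Call $G_n$ the event that $|R_n(\zeta_n(1-c_n))|\le K$ and the derivative bound holds. On $G_n$ there is a zero $\lambda$ with $1-|\lambda|\ge c\,c_n$, and distinct $n$ give distinct zeros because the disks $D_n$ are disjoint on $G_n$.

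\emph{Step 2 ($P(G_n)\ge\delta$ uniformly).} I condition on $\zeta_n$; the other atoms remain independent and uniform. For the derivative sum, each term with $m\neq n$ has
\[
\mathbb E\min\Bigl(\frac{c_nc_m}{|\zeta_m-\zeta_n|^2},1\Bigr)\lesssim \sqrt{c_nc_m}.
\]
The cutoff is handled separately via $P(\exists m:|\zeta_m-\zeta_n|\le Ac_n)\lesssim A c_n\cdot\#\{m:c_m\gtrsim c_n\}+\dots$, which is small for large $A$ thanks to \eqref{log-regularity}, decreasing weights and $\sum c_m=1$. Markov's inequality then handles the rest. The bound $|R_n|\le K$ at a point at distance $c_n$ from $\mathbb T$ is obtained similarly: the real part has expectation $\le 1$, and the imaginary (conjugate) part is controlled in probability by a weak-type/Markov argument after removing close atoms.

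\emph{Step 3 (from expectation to almost sure).} Let $S_N=\sum_{n\le N}c_n^s\to\infty$ and $X_N=\sum_{n\le N}c_n^s\mathbf 1_{G_n}$. Then $X_N\le S_N$ and $\mathbb E X_N\ge\delta S_N$, so $P(X_N\ge \delta S_N/2)\ge \delta/2$. Hence the event $A=\{\sum_n c_n^s\mathbf 1_{G_n}=\infty\}$ has probability at least $\delta/2$. To upgrade this to probability one, I repeat Step 2 conditionally on $\mathcal F_M=\sigma(\zeta_1,\dots,\zeta_M)$. The finitely many fixed atoms contribute to $R_n$ a term that is bounded and smooth near $\zeta_n$ unless $\zeta_n$ falls within some $\rho$ of one of them, which has small probability, uniformly in $n$ large. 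This gives $P(A\mid\mathcal F_M)\ge\delta/2$ for all $M$, and Lévy's $0$–$1$ law yields $\mathbf 1_A\ge\delta/2$ a.s., i.e.\ $P(A)=1$.

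The main obstacle I expect is Step 2, specifically controlling the conjugate (imaginary) part of $R_n$ at scale $c_n$ uniformly in $n$. Its expectation behaves like $\sum_m c_m\log(1/c_n)$, so a naive first-moment bound fails. I would try a truncated second moment instead: split the atoms into those with $|\zeta_m-\zeta_n|\le\rho$, whose total mass is small in probability, and those farther away, whose contribution is a sum of independent centered bounded terms with variance $O(1)$.
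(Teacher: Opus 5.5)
Your strategy differs from the paper's and is sound in principle. Step~2, however, fails as written: the local quantities it needs are heavy-tailed, and first-moment or Markov bounds cannot make them small.

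Write $\mu^{(n)}=\mu-c_n\delta_{\zeta_n}$, $z_n=\zeta_n(1-c_n)$ and $w=1-r_n$. The M\"obius zero $z^\ast=\zeta_n\frac{w-c_n}{w+c_n}$ satisfies $|z^\ast-z_n|=c_n|2-c_n-w|/|w+c_n|$, which is essentially at least $c_n$ because $\Re w\le 1$.

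\emph{The disk meets the circle.} Since $|z^\ast-z_n|\gtrsim c_n$, you need $K>1$ in $D_n=D(z_n,Kc_n)$. Then $D_n$ meets $\mathbb T$ in an arc of length $\asymp c_n$ around $\zeta_n$. All $c_m>0$ and the $\zeta_m$ are i.i.d.\ uniform, so this arc almost surely contains infinitely many atoms. Hence $\operatorname{dist}(\zeta_m,D_n)=0$ for some $m$, and your $G_n$ has probability zero. Likewise $\mathbb P(\exists m\neq n:\ |\zeta_m-\zeta_n|\le Ac_n)=1$ for every $A$, and the term $Ac_n\,\#\{m:c_m\gtrsim c_n\}$ you display grows with $A$.

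\emph{Shrinking the region does not suffice.} Keep the Rouch\'e region at distance $\gtrsim c_n$ from $\mathbb T$. Then the far atoms are harmless, since $\sum_m\mathbb E\big[c_nc_m|\zeta_m-\zeta_n|^{-2}\mathbf 1_{\{|\zeta_m-\zeta_n|>Ac_n\}}\big]\lesssim 1/A$. Your per-term bound $\sqrt{c_nc_m}$ instead sums to $\sqrt{c_n}\sum_m\sqrt{c_m}=\infty$ as soon as $p<2$, which is the only range where the statement has content. The atoms within $Ac_n$ of $\zeta_n$ are the real problem: they contribute about $\mu^{(n)}(I(\zeta_n,Ac_n))/c_n$ to $c_n\sup|R_n'|$. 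This quantity has expectation $\asymp A$, so Markov gives nothing.

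\emph{A necessary condition is missing, and the conjugate part is not controlled.} $G_n$ must also require $\Re R_n(z_n)\le 1-\delta'$. If $\Re r_n\ge 1$, then $z^\ast\notin\mathbb D$; if $\Re r_n$ is close to $1$, then $z^\ast$ lies far from $\zeta_n$. Since $\mathbb E[\Re R_n(z_n)]=1-c_n$, no moment bound provides this. Your truncated second moment for $\Im R_n(z_n)$ has a similar defect: small mass near $\zeta_n$ does not make the conjugate-Poisson contribution of those atoms small.

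\emph{The missing ingredient.} Use the almost-everywhere boundary behaviour of the singular measure $\mu$, transferred to $\zeta_n$ by rotation invariance as in Proposition~\ref{Prop3.1}. For every realization and $m$-a.e.\ $\xi$:
\begin{itemize}
\item $\mu(I(\xi,\delta))/\delta\to0$;
\item $\Re H\mu(r\xi)\to0$;
\item $H\mu(r\xi)$ has a finite limit, since $H\mu\in H^q$ for $q<1$ (alternatively, use Kolmogorov's weak type $(1,1)$ bound for $\Im H\mu$).
\end{itemize}
By Fubini and rotation invariance, these hold almost surely at the fixed point $1$. Conditionally on $\zeta_n$, $\mu^{(n)}$ seen from $\zeta_n$ has the law of $\mu$ seen from $1$ with one independent atom of mass $c_n$ removed, and that atom's Herglotz term at height $c_n$ is bounded by $2$. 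This gives, uniformly for large $n$: $\mu^{(n)}(I(\zeta_n,Ac_n))/c_n\to0$ and $\Re R_n(z_n)\to0$ in probability, and $\Im R_n(z_n)$ is tight. That is exactly what Rouch\'e needs. Two further consequences:
\begin{itemize}
\item $\mathbb P(G_n)\ge 1-\epsilon$ for arbitrary $\epsilon$, so Step~3 becomes unnecessary.
\item Zeros attached to $n$ and to $m$ with $c_m\le\eta c_n$ are distinct because their heights differ.
\end{itemize}

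\emph{Comparison with the paper.} This is the same kind of input as Lemma~\ref{G_set}, but the paper uses it differently. It bounds $\log(1/|\varphi|)\gtrsim(1-|z|)^{p-1+\delta}$ on a Korenblum-type region over a positive-measure set of typical points. It then applies Lemma~\ref{lem:saksman}, which only reaches the exponent $2-p$. Once repaired, your zero-counting gives divergence whenever $\sum_n c_n^s=\infty$, i.e.\ for every $s<1/p$ under \eqref{log-regularity}. For $1<p<2$ this is stronger than the paper's result and matches the threshold in Theorem~\ref{THM4}.
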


We emphasize that the above Theorem provides information only in the case $1 \leq p<2$. 
For $p=1$, the result translates to
\[
\sum_{k \in \bN} (1-|\lambda_k|)^{1-\varepsilon} = \infty \quad \text{almost surely},
\]
which means that the zeros of the random Blaschke product $\varphi$ satisfy (of course) the Blaschke condition, but nothing stronger than that. An example of such sequences is $c_n=(n\log^2(n))^{-1}$.

Furthermore, we mention that if one defines the exponent of convergence of $(c_n)_{n \in \bN}$ by
	\[
	\alpha_c=\inf\left\{\alpha>0:\sum_{n \in \bN} c_n^\alpha<\infty\right\},
	\]
	then under the assumption that the limit 
	\[
	\lim_{n\to\infty}\frac{-\log c_n}{\log n}=p
	\]
	exists, one has
	\[
	\alpha_c=\frac 1 p.
	\]
This means that, under the same hypothesis, Theorem \ref{Prop1} can be reformulated in the following way:
\[
\sum_{k \in \bN} (1-|\lambda_k|)^{2-\frac{1}{\alpha_c}-\varepsilon} = \infty \quad \text{almost surely.}
\]

\begin{rem}
Let $(\vartheta_n)_{n\in\mathbb{N}}$ be a sequence of independent random variables, each uniformly distributed on $[0,2\pi)$, and set $\zeta_n(\omega)=e^{i\vartheta_n(\omega)}\in\mathbb{T}$. Given a sequence of non-negative weights $(c_n)_{n\in\mathbb{N}}$ satisfying the normalization condition $\sum_{n \in \bN} c_n=1$, we define the random measure
\begin{equation}\label{eq: with Bernoulli}
    \nu = \sum_{n \in \mathbb{N}} c_n \varepsilon_n \delta_{\zeta_n}
\end{equation}
where $(\varepsilon_n)_{n \in \mathbb{N}}$ is a sequence of independent (also with respect to the sequence of random variables $(\vartheta_n)_{n \in \bN}$) Bernoulli random variables. In this case, the total variation $\|\nu\|$ is not necessarily equal to $1$. Consequently, the associated random inner function $\psi$ has a different expression. If we denote by $H\nu$ the Herglotz transform of $\nu$, we have
\begin{equation}\label{newvarpsi}
    \int_{\mathbb{T}} \frac{\zeta+z}{\zeta-z} \, d\nu(\zeta) = \frac{1+\psi(z)}{1-\psi(z)} + iA
\end{equation}
where $A = 2\text{Im}(\psi(0))/ |1-\psi(0)|^2$ is a real constant. Consequently we obtain
\begin{equation}\label{newvarphi}
    \psi(z) = \frac{H\nu(z) - iA - 1}{H\nu(z) - iA + 1}.
\end{equation}

With small modifications, Theorem \ref{THM1}, Theorem \ref{THM2} and Theorem \ref{THM4} can be extended to the class of random discrete Clark measures defined as in \eqref{eq: with Bernoulli} and the associated random inner function $\psi$. It is likely that the same happens also for Theorem \ref{Prop1}, but we will not pursue this here.
\end{rem}
Furthermore, we point out that the assumption $\sum_n c_n = 1$ has been made solely to simplify the computations: all the results presented here in fact hold under the weaker assumption $\sum_n c_n < \infty$.

\bigskip
The paper is structured as follows. Section \ref{sect:thm1} is mainly devoted to the proof of Theorem \ref{THM1} while in Section \ref{sect:thm2} we prove Theorem \ref{THM2} and we further develop the connections to Clark operators. In Section \ref{sect:thm4} we prove Theorem \ref{THM4} and Theorem \ref{Prop1}.

\subsection{Notation} If $f$ and $g$ are positive expressions, we will write $f \lesssim g$ if there exists $C>0$ such that $f\leq C g$, where $C$ does not depend on the parameters behind $f$ and $g$. We will simply write $f \simeq g$ if $f \lesssim g$ and $g \lesssim f$. Finally when $f$ and $g$ are expressions for which we can consider the limit of their quotient in a point, with $f \sim g$ we mean that $\lim f/g =1$ in that point.

\section{Proof of Theorem \ref{THM1}}\label{sect:thm1}

Recall that the random function $\varphi$ is inner because its associated Clark measure is singular. Before addressing Theorem \ref{THM1}, we establish the following property regarding its boundary behavior.

\begin{prop}\label{Prop3.1}
Let $\varphi$ be the random inner function defined as in \eqref{phi:def}.
Then, for every fixed $\zeta\in\mathbb T$, the radial limit of the random inner function $\varphi$ at $\zeta$ is almost surely unimodular.
\end{prop}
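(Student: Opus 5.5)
We need to show that for fixed $\zeta$, the radial limit of $\varphi$ at $\zeta$ exists almost surely and has modulus one. Since $\varphi=(H\mu-1)/(H\mu+1)$, we have $|\varphi|=1$ exactly when $\operatorname{Re} H\mu = 0$, and $\varphi = 1$ when $H\mu=\infty$. The plan is therefore to show that, almost surely, the Poisson integral $P\mu(r\zeta)=\operatorname{Re}H\mu(r\zeta)$ tends to $0$ as $r\to1$. We also need $H\mu(r\zeta)$ to converge in $\mathbb C\cup\{\infty\}$, since convergence of the real part alone does not give existence of the limit of $\varphi$.

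The main point is that, because the $\zeta_n$ are uniformly distributed and $\zeta$ is fixed, we have almost surely $\zeta_n\neq\zeta$ for all $n$. Moreover, $\mu$ has a finite symmetric derivative equal to $0$ at $\zeta$ with respect to Lebesgue measure. To see this, compute the expectation
\[
\mathbb E\int_{\mathbb T}\frac{d\mu(\xi)}{|\xi-\zeta|^{s}}=\sum_n c_n\int_{\mathbb T}\frac{dm(\xi)}{|\xi-\zeta|^{s}}<\infty \quad\text{for } s<1.
\]
This is not quite enough on its own, so instead we use the expectation of $\mu(I_h)/|I_h|$ over dyadic arcs $I_h$ centred at $\zeta$.

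Rather than aim for a derivative, the cleaner route is to prove $\mathbb E\,P\mu(r\zeta)\to$ something. But $\mathbb E P\mu(r\zeta)=\sum_n c_n\int P_{r\zeta}\,dm=1$, which does not vanish, so expectation arguments fail and we need an almost sure argument. Fix $\omega$ with all $\zeta_n\neq\zeta$. For each $N$ we split
\[
P\mu(r\zeta)=\sum_{n\le N}c_nP_{r\zeta}(\zeta_n)+\sum_{n>N}c_nP_{r\zeta}(\zeta_n).
\]
The first sum tends to $0$ as $r\to1$. For the tail we need a bound uniform in $r$. The Poisson kernel is dominated by the maximal function, so $\sup_r P\mu_N(r\zeta)\lesssim M\mu_N(\zeta)$, where $\mu_N$ is the tail measure. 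By the weak-type $(1,1)$ estimate applied in expectation (averaging over the independent uniform rotations), $\mathbb P(M\mu_N(\zeta)>\lambda)\lesssim \|\mu_N\|/\lambda$. Indeed, for fixed $\zeta$ and random points, the distribution of $M\mu_N(\zeta)$ equals that of $M\tilde\mu_N(\xi)$ at a uniform random $\xi$, by rotation invariance of the joint law. Hence $\mathbb P(\limsup_r P\mu(r\zeta)>\lambda)\lesssim \sum_{n>N}c_n/\lambda\to0$, so $\lim_r \operatorname{Re}H\mu(r\zeta)=0$ almost surely.

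This rotation-invariance step, which converts a fixed-point maximal bound into a weak-$L^1$ estimate, is the main obstacle; I expect it to work by conditioning on the points of the tail and integrating in $\zeta$. For the imaginary part, the same splitting applies: the finite part converges, since each $\zeta_n\neq\zeta$. For the tail, the conjugate Poisson integral is controlled by the maximal truncated Hilbert transform, which also satisfies a weak-$(1,1)$ bound. This shows that $\operatorname{Im}H\mu(r\zeta)$ is almost surely Cauchy, hence convergent to a finite value or possibly $\pm\infty$. In the first case $|\varphi|\to1$; in the second, $\varphi\to1$. Either way the radial limit is unimodular.
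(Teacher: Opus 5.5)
Your argument is correct, but it follows a different route from the paper.

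\textbf{The paper's route.} It works realization by realization. Each $\varphi_\omega$ is inner, so by Fatou's theorem $|\varphi_\omega(r\xi)|\to 1$ for $m$-a.e.\ $\xi$. Fubini then gives $\int_{\mathbb T}\mathbb P(E_\xi)\,dm(\xi)=0$, and rotation invariance of the law of $(\zeta_n)$ makes $\xi\mapsto\mathbb P(E_\xi)$ constant, hence zero.

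\textbf{Your route.} You use the same transfer mechanism, but apply it to weak-type $(1,1)$ inequalities rather than to the qualitative conclusion of Fatou's theorem. You split $\mu$ into finitely many atoms, which almost surely avoid $\zeta$, plus a tail $\mu_N$ of mass $\sum_{n>N}c_n$. The step you flagged as the main obstacle does work exactly as you expect:
\[
\mathbb P\bigl(M\mu_N(\zeta)>\lambda\bigr)=\int_{\mathbb T}\mathbb P\bigl(M\mu_N(\xi)>\lambda\bigr)\,dm(\xi)=\mathbb E\bigl[m(\{M\mu_N>\lambda\})\bigr]\le \frac{C\|\mu_N\|}{\lambda},
\]
and likewise for the maximal conjugate Poisson integral. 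In effect you re-prove Fatou's theorem in randomized form.

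\textbf{Comparison.} The paper's argument is much shorter. It uses nothing about $\mu$ beyond rotation invariance and the innerness of $\varphi$. Yours needs the discrete structure and the nontrivial weak-type bound for the maximal conjugate function of a measure. In exchange it gives more explicit information: $H\mu(r\zeta)$ converges almost surely to a finite purely imaginary number. Hence the radial limit of $\varphi$ itself exists and differs from $1$. The paper's proof as written only controls $|\varphi(r\zeta)|$, although the same Fubini argument applied to existence of radial limits would fix that.

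\textbf{Write-up fixes.}
\begin{itemize}
\item Drop the exploratory paragraphs about the symmetric derivative and about $\mathbb E\,P\mu(r\zeta)$; they play no role in the final argument.
\item Correct the phrase ``Cauchy, hence convergent to a finite value or possibly $\pm\infty$''. Your weak-type bound already gives $\sup_r|\operatorname{Im}H\mu_N(r\zeta)|<\infty$ almost surely, so the limit is finite and no $\pm\infty$ case arises.
\end{itemize}
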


\begin{proof}
For $\omega \in \Omega$, we denote by $\varphi_\omega$ the realization of the random variable $\varphi$ corresponding to $\omega$.
For a fixed $\xi\in\mathbb T$, we define the exceptional event
\[
E_\xi
=
\left\{
\omega \in \Omega:
\lim_{r\to1^-}|\varphi_\omega(r\xi)|\neq 1
\right\}.
\]
Equivalently, since $|\varphi_\omega|\leq 1$ in $\mathbb D$,
\[
E_\xi
=
\left\{
\omega \in \Omega:
\liminf_{r\to1^-}|\varphi_\omega(r\xi)|<1
\right\}.
\]
For every realization $\omega$ in the probability space $\Omega$, the function $\varphi_\omega$ is inner.
Hence, by Fatou's theorem
\[
\lim_{r\to1^-}|\varphi_\omega(r\xi)|=1
\]
for $m-$a.e. $\xi\in\mathbb T$.
Therefore,
\[
m\left(\{\xi\in\mathbb T:\omega\in E_\xi\}\right)=0
\]
for almost every $\omega \in \Omega$. By Fubini's theorem,
\[
0
=
\mathbb E\left[
m\left(\{\xi\in\mathbb T:\omega\in E_\xi\}\right)
\right]
=
\int_{\mathbb T}\mathbb P(E_\xi)\,dm(\xi).
\]

It remains to use rotational invariance. Since the random variables
$(\zeta_n)_{n\geq1}$ are i.i.d. and uniformly distributed on $\mathbb T$, the
law of $(\zeta_n)_{n\geq1}$ is invariant under rotations. Consequently, for
any $\xi,\zeta\in\mathbb T$,
\[
\mathbb P(E_\xi)=\mathbb P(E_\zeta).
\]
Thus the function $\xi\mapsto \mathbb P(E_\xi)$ is constant and must be zero. Therefore,
\(
\mathbb P(E_\zeta)=0
\)
for every fixed $\zeta\in\mathbb T$. Hence, for every fixed $\zeta\in\mathbb T$,
\[
\lim_{r\to1^-}|\varphi(r\zeta)|=1
\]
almost surely.
\end{proof}

\begin{rem}
	The proof of Proposition \ref{Prop3.1} does not rely on the specific discrete random model described in \eqref{mu:def}, but only on the fact that the random variables $\zeta_n$ are rotationally invariant. Consequently, this argument holds for a broader class of random Clark measures.

\end{rem}

\bigskip
To prove Theorem \ref{THM1}, we will use the following Lemma, which can be found in \cite{Huangsaksman}.
\begin{lem}[\cite{Huangsaksman}*{Lemma 3.1}]\label{Lemma 1}
Let $\varphi$ be a random inner function on the unit disc. If for some $a>1$ 
\begin{equation}\label{eq 1}
    \sup_{z \in \bD} \mathbb{E}\left[\left(\log \frac{1}{|\varphi(z)|}\right)^{a}\right]<\infty,
\end{equation}
 then $\varphi$ is almost surely a Blaschke product.
\end{lem}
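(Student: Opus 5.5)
The approach is to combine Frostman's classical characterization of the singular part of an inner function with a uniform integrability argument on the product space $\Omega\times\mathbb T$. Fix a realization $\omega$ and factor $\varphi_\omega = B_\omega S_{\eta_\omega}$ as in the canonical factorization recalled in the introduction. Taking $-\log|\cdot|$ gives
\[
\log\frac{1}{|\varphi_\omega(z)|} = \log\frac{1}{|B_\omega(z)|} + \int_{\mathbb T}\frac{1-|z|^2}{|\zeta-z|^2}\,d\eta_\omega(\zeta),
\]
where both terms are non-negative. Integrating over the circle $|z|=r$ against $dm$, the Poisson integral of $\eta_\omega$ contributes exactly $\eta_\omega(\mathbb T)$ for every $r$. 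Hence
\[
X_r(\omega) := \int_{\mathbb T}\log\frac{1}{|\varphi_\omega(r\xi)|}\,dm(\xi) \;\ge\; \eta_\omega(\mathbb T) \qquad \text{for all } 0<r<1 .
\]
It therefore suffices to show that $\mathbb E[X_r]\to 0$ as $r\to1^-$. Then $\mathbb E[\eta(\mathbb T)]=0$, so $\eta_\omega=0$ almost surely, which says that $\varphi$ is almost surely a Blaschke product. Measurability of $\omega\mapsto\eta_\omega(\mathbb T)$ is not a real issue: by Frostman, $\eta_\omega(\mathbb T)=\lim_{r\to1}X_r(\omega)$, and each $X_r$ is measurable.

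To show $\mathbb E[X_r]\to0$, consider the functions $f_r(\omega,\xi)=\log(1/|\varphi_\omega(r\xi)|)$ on $\Omega\times\mathbb T$ with the measure $\mathbb P\otimes m$. There are two steps.

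\emph{Step 1: pointwise convergence.} For each fixed $\omega$, the function $\varphi_\omega$ is inner, so $f_r(\omega,\xi)\to0$ for $m$-a.e. $\xi$. By Fubini's theorem (the same device used in the proof of Proposition \ref{Prop3.1}), $f_r\to0$ $(\mathbb P\otimes m)$-almost everywhere.

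\emph{Step 2: uniform integrability.} By Fubini again and hypothesis \eqref{eq 1},
\[
\int_{\mathbb T}\mathbb E\big[f_r(\cdot,\xi)^a\big]\,dm(\xi)\le \sup_{z\in\mathbb D}\mathbb E\Big[\Big(\log\tfrac{1}{|\varphi(z)|}\Big)^a\Big]<\infty
\]
uniformly in $r$. Since $a>1$, the family $\{f_r\}$ is bounded in $L^a(\mathbb P\otimes m)$ and hence uniformly integrable.

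Vitali's convergence theorem then gives $\mathbb E[X_r]=\iint f_r\,d(\mathbb P\otimes m)\to0$, which completes the argument.

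The main obstacle is measure-theoretic: the joint measurability of $(\omega,\xi)\mapsto f_r(\omega,\xi)$, needed to apply Fubini. I would handle it by noting that $\varphi_\omega(z)$ is a measurable function of $\omega$ for fixed $z$ and continuous in $z$ for fixed $\omega$. Such a Carathéodory function is jointly measurable. The rest is routine; the essential point is that the condition $a>1$ (rather than $a=1$) supplies exactly the uniform integrability needed to pass the almost everywhere radial convergence to convergence of expectations.
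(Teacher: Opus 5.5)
Your argument is correct and complete. The canonical factorization gives $X_r\ge\eta_\omega(\mathbb T)$ for every $r$. The Fubini argument gives $(\mathbb P\otimes m)$-a.e.\ radial convergence $f_r\to0$. The $L^a$ bound with $a>1$ gives uniform integrability, so Vitali yields $\mathbb E[X_r]\to0$ and hence $\mathbb E[\eta(\mathbb T)]=0$.

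The paper does not prove this lemma; it imports it from Huang--Saksman, so there is no in-paper proof to compare against. Your proof is the natural Frostman-plus-uniform-integrability argument for it, and because it works on $\Omega\times\mathbb T$ it needs no rotation invariance.

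One point worth recording: you only use the moment bound on circles $|z|=r$ with $r$ close to $1$. It suffices to run Vitali along a sequence $r_k\to1$, which also makes the exceptional set in Step 1 obviously measurable. So you actually prove the version with $\sup_{\delta\le|z|<1}$ in place of $\sup_{z\in\mathbb D}$.

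That weaker version is the one the paper needs. In the paper $\varphi(0)=0$, since $H\mu(0)=\mu(\mathbb T)=1$. The hypothesis as literally stated, with the supremum over all of $\mathbb D$, therefore fails near $z=0$. The reduction to $\sup_{|z|\ge\delta}$ made in the proof of Theorem~\ref{THM1} is justified by exactly an argument like yours.
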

We want to apply the above lemma to the random inner function $\varphi$ defined as in \eqref{phi:def}.
Since $\Re(H\mu(z)) \geq 0$ and $\varphi(0)=0$, the result follows if we can show that
\[
\sup_{|z| \geq \delta} \bE \left[ \left( \log^+ \frac{1}{|H\mu(z)-1|} \right)^{a} \right] < \infty
\]
for some $a > 1$ and $\delta > 0$. Furthermore, the rotational invariance of $\mu$ allows us to restrict our attention to the radial segment $[ \delta, 1)$: indeed
\[
\bE\left[ H\mu(z) \right]=\bE\left[ H\mu(e^{is}z) \right]
\]
for every $0<s<2\pi$.
Thus, it is sufficient to demonstrate that
\begin{equation*}
    \sup_{\delta \leq r < 1} \bE \left[ \left( \log^+ \frac{1}{|H\mu(r)-1|} \right)^{a} \right] < \infty.
\end{equation*}
Using the lower bound $|H\mu(r)-1| \geq |\Im(H\mu(r))|$, the above condition is implied by
\[
\sup_{\delta \leq r < 1} \bE \left[ \left( \log^+ \frac{1}{|\Im(H\mu(r))|} \right)^{a} \right] < \infty,
\]
which, in our setting, is equivalent to showing
\begin{equation} \label{eq:Saksman variation}
    \sup_{\delta \leq r < 1} \bE \left[ \left( \log^+ \frac{1}{\left\vert \sum_{n \in \mathbb{N}} c_n \frac{2r \sin(\vartheta_n)}{1 - 2r \cos(\vartheta_n) + r^2} \right\vert} \right)^{a} \right] < \infty.
\end{equation}
The rest of this section is dedicated to the proof of \eqref{eq:Saksman variation}. In order to achieve that, we recall the classical Van der Corput Lemma about oscillatory integrals of
the form
\[
I(\lambda)=\int_{a}^b e^{i\lambda u(x)}\, dx, \qquad \lambda \in \bR.
\]

\begin{thm}\label{Van der Corput}
Let $-\infty<a<b<\infty$. 
\begin{itemize}
    \item[(i)]  Let $k\geq 2$. For every real valued function $u \in C^k(\mathbb{R})$ that satisfies $|u^{(k)}(t)| \geq 1$ for all $t\in (a,b)$, we have
\begin{equation*}\label{eq 4}
\left\vert   \int_{a}^b e^{i\lambda u(x)}\,dx\right \vert\leq 12 |\lambda|^{-\frac{1}{k}}\,  \quad \text{for all } \lambda \neq 0.
\end{equation*}
\item[(ii)] Let $k = 1$. For every function $u \in C^1(\mathbb{R})$ such that $|u'(t)| \geq 1$ and $u$ is monotonic in $(a,b)$, we have
\begin{equation*}\label{eq 5}
   \left\vert   \int_{a}^b e^{i\lambda u(x)}\, dx\right \vert\leq 3|\lambda|^{-1}\,  \quad \text{for all } \lambda \neq 0.
\end{equation*}
\end{itemize}
\end{thm}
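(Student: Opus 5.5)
We first prove (ii) directly and then handle (i) by reducing it to (ii) through a sublevel-set estimate for $u'$. The induction on $k$ in the classical proof is avoided on purpose, since it produces constants $5\cdot 2^{k-1}-2$ that grow with $k$.

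\emph{Step 1: case $k=1$.} Write $e^{i\lambda u}=\frac{1}{i\lambda u'}\frac{d}{dx}e^{i\lambda u}$ and integrate by parts on $(a,b)$. The boundary terms contribute at most $2/|\lambda|$, because $|u'|\ge 1$. The remaining term is $\frac{1}{|\lambda|}\int_a^b\bigl|\frac{d}{dx}\frac{1}{u'}\bigr|\,dx$. Since $u'$ is monotone and does not change sign, $1/u'$ is monotone, so this term equals $\frac{1}{|\lambda|}\bigl|\frac{1}{u'(b)}-\frac{1}{u'(a)}\bigr|\le \frac{1}{|\lambda|}$. This gives the bound $3|\lambda|^{-1}$.

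\emph{Step 2: structure of the sublevel set for $k\ge 2$.} Replacing $\lambda$ by $-\lambda$ if necessary, we may assume $\lambda>0$. Set $m=k-1\ge 1$ and $g=u'$, so that $|g^{(m)}|\ge 1$ on $(a,b)$. Since $g^{(m)}$ does not vanish, Rolle's theorem shows that $g^{(j)}$ has at most $m-j$ zeros for each $j$. Hence $(a,b)$ splits into at most $m$ intervals on which $g$ is monotone. For $\delta>0$, the set $\{|g|<\delta\}$ is then a union of at most $O(m)$ intervals. We use the sharp (Chebyshev-type) sublevel estimate
\[
\bigl|\{x\in(a,b):|g(x)|<\delta\}\bigr|\le 4\Bigl(\frac{m!\,\delta}{2^{2m-1}}\Bigr)^{1/m},
\]
which follows from Lagrange interpolation at $m+1$ points of the sublevel set, compared with the Chebyshev polynomial. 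By Stirling's formula the right-hand side is at most $C\,\tfrac{m}{e}\,\delta^{1/m}$.

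\emph{Step 3: assembling the bound.} On the sublevel set we bound the integral trivially by its length. On each complementary interval, $|g|\ge\delta$ and $g$ is monotone, so Step 1 applied to $u/\delta$ with frequency $\lambda\delta$ gives at most $3/(\lambda\delta)$ per interval. This yields roughly
\[
|I(\lambda)|\le C\,\frac{m}{e}\,\delta^{1/m}+ 3(m+1)\frac{1}{\lambda\delta}.
\]
Choosing $\delta=\lambda^{-m/k}$ gives a bound of the form $A_k\lambda^{-1/k}$.

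\emph{Main obstacle.} For $k=2$ the count in Step 3 is sharp. There are at most two monotone pieces outside a single interval of length at most $4\sqrt{\delta/2}$. Optimizing $\delta$ then comfortably gives a constant below $12$, which presumably is the case the paper needs for \eqref{eq:Saksman variation}. For large $k$, however, the factor $m$ appears in both terms, so the approach as stated gives $A_k\sim k$ rather than a uniform $12$.

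The first fix I would try exploits that, at fixed scale, $\lambda^{-1/k}$ increases with $k$ when $\lambda>1$. On each monotone piece of $g$, I would compare with the $(k-1)$ bound at a rescaled frequency. The aim is to show that pieces far from the sublevel set contribute a geometrically decaying sum, because $|g|$ grows like $\mathrm{dist}^{m}$ away from it, rather than contributing $m$ equal terms. The regime $\lambda\le 1$ also needs separate treatment, since the trivial bound is not available on long intervals; there I would again use the sublevel bound at $\delta\simeq1$ together with Step 1. If the uniform constant resists, I would record the bound $12$ for $k=2$ together with the $k$-dependent bound. This suffices for the application in this section.
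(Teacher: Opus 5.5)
\textbf{This is a genuine gap, and it cannot be closed, because the statement as printed is false.} The uniform constant $12$ in (i) does not hold for all $k\ge2$, so the ``first fix'' in your last paragraph cannot work. Your diagnosis that a factor of order $k$ is unavoidable is correct.

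Take $\lambda>0$, $k$ even, $u(x)=x^k/k!$ (so $u^{(k)}\equiv1$) and $[a,b]=[-R,R]$. As $R\to\infty$ the integral tends to $\tfrac{2}{k}\Gamma(\tfrac1k)e^{i\pi/(2k)}(k!/\lambda)^{1/k}$. Its modulus is $2\Gamma(1+\tfrac1k)(k!)^{1/k}\lambda^{-1/k}\sim\tfrac{2k}{e}\lambda^{-1/k}$, which is already about $13.2\,\lambda^{-1/k}$ for $k=16$.

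The paper gives no proof; it cites Stein (Prop.~2 in VIII.1). Stein's induction on $k$ gives $c_k=5\cdot2^{k-1}-2$, i.e.\ $3$ for $k=1$ and $8$ for $k=2$. The paper only uses $k=1,2$, in the estimate of $\int_{-\pi}^{\pi}e^{i\lambda\sin\vartheta/(1-2r\cos\vartheta+r^2)}\,d\vartheta$.

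Part (ii) has a similar defect. The printed hypothesis ``$u$ monotonic'' is automatic from $|u'|\ge1$, and it is not enough. Let $u$ be the inverse of $y\mapsto\int_0^y(\tfrac34+\tfrac14\cos s)\,ds$, so $1\le u'\le2$. For $\lambda=1$ the substitution $y=u(x)$ gives $\int_0^b e^{iu}\,dx=\int_0^{u(b)}e^{iy}(\tfrac34+\tfrac14\cos y)\,dy=\tfrac{u(b)}{8}+O(1)$, which is unbounded. The correct hypothesis is that $u'$ is monotonic. That is exactly what your Step 1 uses, and with it Step 1 is correct.

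So what is actually provable is your fallback: constant $3$ for $k=1$, a constant below $12$ for $k=2$, and a $k$-dependent constant for $k\ge3$. For the last case your route differs genuinely from Stein's. You use one sublevel-set estimate for $u'$, then the $k=1$ case on each monotone piece. Stein instead removes a neighbourhood of the minimum of $|u^{(k-1)}|$ and doubles the constant at each step. Your route gives the optimal linear growth rather than exponential growth.

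\textbf{The details of Steps 2--3 need repair.}

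First, your sublevel bound misplaces the power of $2$. For $m=2$ and $g(x)=x^2/2-\delta$ one has $|\{|g|<\delta\}|=4\sqrt\delta$, whereas your formula gives $2\sqrt\delta$. The Chebyshev extremal gives $(m!\,2^{2m-1}\delta)^{1/m}$. Moreover $E=\{|g|<\delta\}$ need not be an interval, so the Chebyshev comparison needs an extra argument.

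It is simpler to proceed as follows. Choose $x_0<\dots<x_m$ in $\overline{E}$ with $|x_i-x_j|\ge|i-j|\,|E|/m$. Compare $|g[x_0,\dots,x_m]|\ge 1/m!$ with the Lagrange formula $g[x_0,\dots,x_m]=\sum_j g(x_j)\prod_{i\ne j}(x_j-x_i)^{-1}$. Since $\sum_j 1/(j!(m-j)!)=2^m/m!$, this gives $|E|\le 2m\,\delta^{1/m}$.

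Second, the piece count should be $2m$, not $m+1$. $g$ has at most $m$ monotonicity intervals $J$, and each $J\cap E$ is an interval. Hence $(a,b)\setminus E$ splits into at most $2m$ intervals on which $|g|\ge\delta$ and $g$ is monotone. This gives $|I(\lambda)|\le 2m\,\delta^{1/m}+6m/(\lambda\delta)$, and $\delta=\lambda^{-m/k}$ yields $8(k-1)\lambda^{-1/k}$ for every $\lambda>0$.

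Third, the case $\lambda\le1$ needs no separate treatment. The trivial bound is only used on $E$, whose measure does not depend on $b-a$.

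Finally, for $k=2$ the set $\{|u'|<\delta\}$ is a single interval of length at most $2\delta$, because $|u''|\ge1$. It is not of length $4\sqrt{\delta/2}$; that value would only give decay $\lambda^{-1/3}$. With $2\delta$ you get $2\delta+6/(\lambda\delta)$, which equals $4\sqrt3\,\lambda^{-1/2}<12\,\lambda^{-1/2}$ at $\delta=\sqrt{3/\lambda}$.
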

For a proof, we refer to \cite{Stein93}*{Proposition 2, Section VIII.1}.

\bigskip
The following Lemma  is a direct application of the Van der Corput lemma.

\begin{lem}\label{Lem app. VDC}
There exists a positive absolute constant $C$ such that for any $r \in [\delta, 1)$ and $|\lambda|\geq1$,
\begin{equation}\label{eq 6}
 \left\vert \int_{-\pi}^{\pi} e^{i\lambda \frac{\sin(\vartheta)}{1-2r\cos(\vartheta)+r^2}} d\vartheta \right\vert \leq C|\lambda|^{-1/2}.
\end{equation} 
\end{lem}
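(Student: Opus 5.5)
We apply the Van der Corput lemma (Theorem \ref{Van der Corput}) to the phase
\[
u_r(\vartheta)=\frac{\sin\vartheta}{1-2r\cos\vartheta+r^2},\qquad \vartheta\in[-\pi,\pi],
\]
after cutting $[-\pi,\pi]$ into a bounded number of subintervals. On each subinterval we want either $|u_r'|$ or $|u_r''|$ bounded below by an absolute constant, uniformly in $r\in[\delta,1)$. On each subinterval we then use part (i) with $k=2$, which gives a bound $\lesssim|\lambda|^{-1/2}$, or part (ii) with $k=1$, which gives $\lesssim|\lambda|^{-1}\le|\lambda|^{-1/2}$ since $|\lambda|\ge1$. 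If the lower bound on a subinterval is a constant $c_0$ rather than $1$, we apply the lemma to $u_r/c_0$ with $\lambda$ replaced by $c_0\lambda$; this only changes the absolute constant. Summing over the finitely many pieces gives \eqref{eq 6}.

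The first step is to compute the derivatives. Write $D=1-2r\cos\vartheta+r^2$. Then
\[
u_r'(\vartheta)=\frac{\cos\vartheta\,(1+r^2)-2r}{D^2}.
\]
Its zeros are at $\cos\vartheta_0=2r/(1+r^2)$, i.e. $\vartheta=\pm\vartheta_0(r)$, with $\vartheta_0\approx 1-r$ as $r\to1$. There are exactly two critical points, so $u_r$ is piecewise monotone on at most three intervals. This takes care of the monotonicity hypothesis in part (ii).

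Next we find lower bounds on each region.
\begin{itemize}
\item Away from $\vartheta=0$, say $|\vartheta|\ge\eta$ for a small fixed $\eta$ and $r\ge\delta$ close enough to $1$, we have $D\simeq1$ and $u_r\approx \frac{1}{2}\cot(\vartheta/2)$. The numerator of $u_r'$ equals $(1+r^2)\cos\vartheta-2r\le -(1-r)^2-c\vartheta^2$, so $|u_r'|\gtrsim 1$ there and part (ii) applies.
\item Near $0$, the function behaves like the Poisson-type profile $u_r(\vartheta)\approx \vartheta/((1-r)^2+\vartheta^2)$. Its derivatives are huge in absolute value, except near the critical points $\pm\vartheta_0$.
\item Near $\pm\vartheta_0$, we use $u_r''$. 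A direct computation gives $|u_r''(\pm\vartheta_0)|\simeq (1-r)^{-3}\ge 1$.
\item The key technical point is to cover the interval around $0$ by a fixed number of pieces, independently of $r$. On each piece, either $|u_r'|\ge1$ holds, or $|u_r''|\ge1$ holds. The natural candidates are the zones $\bigl||\vartheta|-\vartheta_0\bigr|\le c\vartheta_0$, where we use $u_r''$, and their complements, where we use $u_r'$.
\end{itemize}
To verify these bounds uniformly we rescale by $\vartheta=(1-r)t$. Then $u_r\approx (1-r)^{-1}\,t/(1+t^2)$, and the bounds reduce to properties of the single fixed function $t/(1+t^2)$. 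Every derivative gains powers of $(1-r)^{-1}\ge1$, which only helps. For $r$ in a compact subset $[\delta,r_0]$ the phase is real-analytic with nondegenerate critical points, and a compactness argument gives the bounds directly.

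The main obstacle is the transition region between the rescaled picture near $0$ and the $\cot$ regime away from $0$, uniformly as $r\to1$. One has to check that $|u_r'|\gtrsim1$ holds on all of $\{|\vartheta|\ge C(1-r)\}$. This follows from the numerator bound above together with $D\lesssim (1-r)^2+\vartheta^2$, which gives $|u_r'|\gtrsim \vartheta^2/((1-r)^2+\vartheta^2)^2\gtrsim \vartheta^{-2}\ge 1/\pi^2$. We expect this to go through cleanly.
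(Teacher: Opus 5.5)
Your plan matches the paper's: cut $[-\pi,\pi]$ into boundedly many pieces, use van der Corput with $k=2$ on a window around the critical points $\pm\vartheta_0\approx\pm(1-r)$, and use $k=1$ elsewhere. The paper cuts $[0,\pi]$ at $a_1=\tfrac{1-r}{2}$, $a_2=\tfrac43(1-r)$ and $a_3$, and simply asserts the bounds. Your rescaling to $t/(1+t^2)$, the estimate $|u_r'|\gtrsim\vartheta^{-2}$ on $|\vartheta|\ge C(1-r)$, and compactness for $r\le r_0$ supply the uniformity it leaves implicit.

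There is one step that does not do what you claim. In the $k=1$ case of van der Corput the hypothesis is that $u'$ is monotone, not $u$. The paper's statement of the lemma says ``$u$ is monotonic'', but Stein's version, and the one the paper actually checks in its proof, requires $u_r'$ monotone. Your check, that $u_r$ has only two critical points, is vacuous: $|u_r'|\ge\kappa>0$ on an interval already forces $u_r$ to be monotone there. Without monotonicity of $u'$ the $O(|\lambda|^{-1})$ bound is false in general, because integration by parts produces the total variation of $1/u'$, which can be arbitrarily large.

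The repair is short. Differentiating your formula for $u_r'$ gives
\[
u_r''(\vartheta)=\frac{\sin\vartheta\,\bigl[8r^2-(1+r^2)^2-2r(1+r^2)\cos\vartheta\bigr]}{D^3}.
\]
Hence $u_r'$ is monotone between consecutive points of $\{-\pi,-a_3,0,a_3,\pi\}$, where $\cos a_3=\frac{6r^2-r^4-1}{2r(1+r^2)}$. This is exactly the paper's $a_3$, and $a_3\sim\sqrt3\,(1-r)$ as $r\to1$; it is the inflection point $t=\sqrt3$ of $t/(1+t^2)$. So your $k=1$ pieces must also be cut at these points:
\begin{itemize}
\item the central piece $|\vartheta|\le(1-c)\vartheta_0$, where $u_r'$ has a maximum, must be cut at $0$;
\item the pieces $t\in[1+c,C]$ must be cut at $t=\sqrt3$.
\end{itemize}
After this, $u_r'$ is monotone on every $k=1$ piece, and the number of pieces is still independent of $r$. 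In the compact range $r\in[\delta,r_0]$ the same formula shows $u_r''$ has at most four zeros on the circle, so your subdivision there can be refined in the same way.

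A minor slip: $(1+r^2)\cos\vartheta-2r=(1-r)^2-(1+r^2)(1-\cos\vartheta)$ is positive for $|\vartheta|<\vartheta_0$. So the bound $\le-(1-r)^2-c\vartheta^2$ holds only for $|\vartheta|\ge C(1-r)$ with $C$ large. Fortunately that is the only place you use it.
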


\begin{proof}
First of all by symmetry it suffices to estimate the integral from $0$ to $\pi$. We want to use Theorem \ref{Van der Corput} with 
\[
u(\vartheta)=\frac{\sin(\vartheta)}{1-2r\cos(\vartheta)+r^2}.
\]
For \(r\) sufficiently close to \(1\), set
\[
a_1=\frac{1-r}{2},
\qquad
a_2=\frac{4}{3}(1-r),
\qquad
a_3=
\arccos\left(
\frac{6r^2-r^4-1}{2r(1+r^2)}
\right).
\]
Then \( a_0=0<a_1<a_2<a_3<\pi = a_4\). Moreover, \(u_r'\) is monotone on each of the intervals
\[
[0,a_1],\qquad [a_2,a_3],\qquad [a_3,\pi],
\]
and there exists a constant \(\kappa>0\), independent of \(r\), such that
\(
|u_r'(t)|>\kappa
\)
on these intervals. In addition,
\[
u_r''(t)\le -\kappa,
\qquad t\in[a_1(r),a_2(r)],
\]
for all \(r\) sufficiently close to \(1\).
Therefore, due to Van der Corput Lemma and by splitting the integral in the following four intervals, we have  that
\begin{align*}
 \left \vert \int_{-\pi}^{\pi} e^{i\lambda \frac{\sin(\vartheta)}{1-2r\cos(\vartheta)+r^2}}d\vartheta   \right\vert=&2 \left \vert \int_{0}^{\pi} e^{i\lambda \frac{\sin(\vartheta)}{1-2r\cos(\vartheta)+r^2}}d\vartheta   \right\vert\\
 \leq& 2 \sum_{n=1}^{4} \Big| \int_{a_{n-1}}^{a_n} e^{i\lambda \frac{\sin(\vartheta)}{1-2r\cos(\vartheta)+r^2}}d\theta \Big| \\
 \leq& \frac{48}{|\kappa \lambda|^{\frac{1}{2}}}+ \frac{18}{|\kappa \lambda|}\leq \frac{C}{| \lambda|^{\frac{1}{2}}}\ . 
\end{align*}
This ends the proof.
\end{proof}

To obtain our result, we will use Ess\'een's concentration inequality (see \cite{Esseen66} or \cite{Tao12}*{Exercise 2.2.11}), which is formulated in terms of the characteristic function of a random variable. For the reader's convenience, we recall the relevant definitions and state the inequality here. Given a real-valued random variable \(X\), its \textit{characteristic function} \(\phi_X\) is defined by
\begin{equation}
    \phi_X(t) = \bE\!\left[e^{itX}\right] = \int_{\bR} e^{itx}\, d\nu_X(x),
\end{equation}
where \(\nu_X\) denotes the distribution of \(X\). We may now state the inequality.

\begin{thm}[Esséen concentration inequality]
    Let $X$ be a random variable taking values in $\bR$. Then for any $r>0$, $\varepsilon >0$, there exists $C_{\varepsilon}$ depending only on $\varepsilon$ such that
    \begin{equation}
        \sup_{x_0 \in \bR} \bP(|X-x_0| \leq r) \leq C_{\varepsilon}r \int_{|t|\leq \frac{\varepsilon}{r}} |\phi_X(t)|\,  dt.
    \end{equation}
\end{thm}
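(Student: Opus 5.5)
The plan is to dominate the indicator of the interval $[x_0-r,x_0+r]$ by a nonnegative function whose Fourier transform is supported in a bounded interval. Taking expectations then turns the probability into an integral of the characteristic function $\phi_X$ over a bounded frequency range. The natural choice is the Fejér-type kernel
\[
g(y)=\Bigl(\frac{\sin(y/2)}{y/2}\Bigr)^{2}=\int_{-1}^{1}(1-|t|)\,e^{ity}\,dt ,
\]
which is nonnegative and bounded by $1$. On $|y|\le 1$ it is bounded below by $c_0=\bigl(2\sin(1/2)\bigr)^2>0$, since $g$ is decreasing in $|y|$ on $[0,2\pi]$. The integral identity is the standard computation of the Fourier transform of the triangle function, and I would verify it directly.

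First I would reduce to the case $0<\varepsilon\le 1$. The right-hand side $r\int_{|t|\le \varepsilon/r}|\phi_X(t)|\,dt$ is nondecreasing in $\varepsilon$. Hence for $\varepsilon>1$ one may take $C_\varepsilon=C_1$. Now fix $0<\varepsilon\le1$, $x_0\in\bR$, $r>0$, and set $\lambda=\varepsilon/r$. If $|x-x_0|\le r$, then $|\lambda(x-x_0)|\le\varepsilon\le1$. This gives the pointwise bound
\[
\mathbf 1_{\{|x-x_0|\le r\}}\le c_0^{-1}\,g\bigl(\lambda(x-x_0)\bigr),
\]
and taking expectations yields $\bP(|X-x_0|\le r)\le c_0^{-1}\,\bE\bigl[g(\lambda(X-x_0))\bigr]$.

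Next I would insert the integral representation of $g$ and exchange expectation and integral. Fubini applies because the integrand $(1-|t|)e^{it\lambda(X-x_0)}$ is bounded and the $t$-domain is compact. This gives
\[
\bE\bigl[g(\lambda(X-x_0))\bigr]=\int_{-1}^{1}(1-|t|)\,e^{-it\lambda x_0}\,\phi_X(\lambda t)\,dt\le\int_{-1}^{1}|\phi_X(\lambda t)|\,dt=\frac{r}{\varepsilon}\int_{|s|\le \varepsilon/r}|\phi_X(s)|\,ds ,
\]
using the substitution $s=\lambda t$. Combining this with the previous step gives the inequality with $C_\varepsilon=(c_0\varepsilon)^{-1}$ for $\varepsilon\le1$. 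The bound is uniform in $x_0$, so the supremum over $x_0$ follows.

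There is no real obstacle here. The only points needing care are the choice of a kernel that is simultaneously nonnegative, bounded below near the origin, and band-limited, and the justification of Fubini. Both are settled by the explicit Fejér kernel above.
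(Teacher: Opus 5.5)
Your proof is correct. The Fejér kernel $g$ is nonnegative and band-limited, and satisfies $g\ge c_0\mathbf{1}_{[-1,1]}$; the Fubini exchange is justified; and the monotonicity reduction handles $\varepsilon>1$, so you get $C_\varepsilon=(c_0\min(\varepsilon,1))^{-1}$. The paper gives no proof of its own and only cites Esséen and Tao's exercise, whose standard argument is this same majorant-plus-Fubini route, so there is nothing further to compare.
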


\begin{proof}[Proof of Theorem \ref{THM1}]
    We need to verify that \eqref{eq:Saksman variation} holds.

We first prove that there exists a positive constant $C'$ such that for $\varepsilon < 1$,
\begin{equation}\label{eq 7}
 \mathbb{P}\left( \left\vert \sum_{n \in \mathbb{N}}c_n\frac{2r\sin(\vartheta_n)}{1-2r\cos(\vartheta_n)+r^2}\right\vert\leq \varepsilon \right)  \leq C' \sqrt{\varepsilon}. 
\end{equation}
We can suppose without loss of generality that $c_0 \neq 0$ and $\varepsilon < 4\pi \delta c_0< 4\pi c_0<1$. 
First of all notice that the characteristic function $\phi_S(\lambda)$ of the random variable 
\[
S =\sum_n 2rc_n \frac{\sin(\vartheta_n)}{1-2r\cos(\vartheta_n)+r^2}\, ,
\]
coincides with
\[
\phi_S(\lambda) = \prod_{n}\phi_0(2rc_n\lambda)\, ,
\]
where 
\[
\phi_0(\lambda)=\mathbb{E}\left( e^{i \lambda \frac{\sin(\vartheta)}{1-2r\cos(\vartheta)+r^2}}\right), 
\]
since $\vartheta_n$'s are independent random variables.
Since for every random variable $X$ we have that $|\phi_X(\lambda)| \leq  1$, it follows that
\[
|\phi_S(\lambda)|\leq |\phi_0(2rc_0\lambda)|\, .
\]
Hence, by Ess\'een inequality, we find that
\begin{align*}
  \mathbb{P}\left( \left\vert S\right\vert\leq \varepsilon \right) & \leq  C_{2\pi}\, \varepsilon \int_{-\frac{2\pi}{\varepsilon}}^{\frac{2\pi}{\varepsilon}}|\phi_S(\lambda)|\,d\lambda
  \leq C_{2\pi}\, \varepsilon\int_{-\frac{2\pi}{\varepsilon}}^{\frac{2\pi}{\varepsilon}}|\phi_0(2rc_0\lambda)|\,d\lambda
  = \frac{C_{2\pi}}{2rc_0}\varepsilon \int_{-\frac{4\pi rc_0}{\varepsilon}}^{\frac{4\pi rc_0}{\varepsilon}}|\phi_0(\lambda)|\,d\lambda\\
  & \leq \frac{C_{2\pi}}{rc_0}\varepsilon \Big( \int_0^1 |\phi_0(\lambda)| d\lambda  + \int_{1}^{\frac{4\pi rc_0}{\varepsilon}} \frac{C}{\sqrt{\lambda}} d\lambda\Big) \\
  & \leq C_2\varepsilon + C_3 \sqrt{\varepsilon} \\
  & \leq C'\sqrt{\varepsilon}.
\end{align*}
By Kolmogorov formula, we have that
 \begin{align*}
     \mathbb{E}\left[\left(\log^+\frac{1}{\left\vert \sum_{n} c_n\frac{2r\sin(\vartheta_n)}{1-2r\cos(\vartheta_n)+r^2}\right\vert}\right)^a\right]& = a\int_0^{+\infty} \mathbb{P}\left( \log^+\frac{1}{\left\vert \sum_{n}c_n\frac{2r\sin(\vartheta_n)}{1-2r\cos(\vartheta_n)+r^2}\right\vert}\geq t\right) t^{a-1}dt\\
     & = a\int_0^{+\infty} \mathbb{P}\left( \left\vert \sum_{n}c_n\frac{2r\sin(\vartheta_n)}{1-2r\cos(\vartheta_n)+r^2}\right\vert\leq e^{-t}\right) t^{a-1}dt\\
     & \leq \int_0^{-\log(4\pi rc_0)}t^{a-1}dt + C' \int_{-\log(4\pi rc_0)}^{+\infty} e^{-\frac{t}{2}} t^{a-1} dt\\
     &\leq \int_0^{-\log(4\pi \delta c_0)}t^{a-1}dt + C' \int_{-\log(4\pi \delta c_0)}^{+\infty} e^{-\frac{t}{2}} t^{a-1} dt < \infty . 
     \end{align*}
Since the last expression does not depend on $r$, the theorem is now proved.
\end{proof}


\section{Proof of Theorem \ref{THM2}} \label{sect:thm2}

We mention the relationship between the existence of the angular derivative for an inner function $\varphi$ and the properties of the Clark measures $\sigma_{\varphi, \alpha}$. The following theorem characterizes the presence of atoms in the Clark measure in terms of the angular derivative.

\begin{lem}[\cite{Saksman}, Theorem 3.1] \label{lem:saksman1}
The inner function $\varphi$ possesses an angular derivative at a point $\xi \in \mathbb{T}$ if and only if for some $\alpha \in \bT$ the Clark measure $\sigma_{\varphi, \alpha}$ has an atom at $\xi$. In this case, $\alpha = \angle \lim_{z \to \xi} \varphi(z)$ and the derivative satisfies
\[
\angle \lim_{z \to \xi} |\varphi'(z)| = \frac{1}{\sigma_{\varphi, \alpha}(\{\xi\})}.
\]
\end{lem}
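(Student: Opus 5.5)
The plan is to reduce everything to the classical Julia--Carathéodory theorem, using the Herglotz representation of $\sigma_\alpha:=\sigma_{\varphi,\alpha}$ evaluated along the radius ending at $\xi$.

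\emph{Step 1: an exact formula for the atom.} For fixed $\alpha\in\mathbb T$, the defining identity gives
\[
\frac{1-|\varphi(z)|^2}{|\alpha-\varphi(z)|^2}=\Re\frac{\alpha+\varphi(z)}{\alpha-\varphi(z)}=\int_{\mathbb T}\frac{1-|z|^2}{|\zeta-z|^2}\,d\sigma_\alpha(\zeta).
\]
Put $z=r\xi$ and multiply by $(1-r)$. For $\zeta\neq\xi$, the integrand $(1-r)\frac{1-r^2}{|\zeta-r\xi|^2}$ tends to $0$. Moreover $|\zeta-r\xi|\ge 1-r$, so the integrand is bounded by $2$. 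At $\zeta=\xi$ the integrand equals $1+r\to2$. Dominated convergence therefore gives
\[
\lim_{r\to1^-}(1-r)\,\frac{1-|\varphi(r\xi)|^2}{|\alpha-\varphi(r\xi)|^2}=2\,\sigma_\alpha(\{\xi\}). \tag{$\ast$}
\]

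\emph{Step 2: an atom implies an angular derivative.} Suppose $\sigma_\alpha(\{\xi\})=s>0$. Using $|\alpha-\varphi|\ge 1-|\varphi|$ in $(\ast)$, we get for $r$ close to $1$
\[
\frac{1+|\varphi(r\xi)|}{1-|\varphi(r\xi)|}\ge \frac{s}{1-r}.
\]
Hence $\liminf_{r\to1}\frac{1-|\varphi(r\xi)|}{1-r}\le 2/s<\infty$. By the Julia--Carathéodory theorem, $\varphi$ then has an angular derivative at $\xi$, with some unimodular non-tangential limit $\beta$.

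To identify $\beta$, rewrite $(\ast)$ as
\[
|\alpha-\varphi(r\xi)|^2\le \frac{(1-r)(1-|\varphi(r\xi)|^2)}{s(1-r)}\cdot(1+o(1)).
\]
The right-hand side tends to $0$ because $|\varphi(r\xi)|\to1$. Hence $\beta=\alpha$.

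\emph{Step 3: an angular derivative implies an atom, with the derivative formula.} Conversely, suppose $\varphi$ has an angular derivative at $\xi$ with boundary value $\alpha$, and set $d=\angle\lim|\varphi'|$. Julia--Carathéodory gives $0<d<\infty$ together with the radial limits
\[
\frac{1-|\varphi(r\xi)|}{1-r}\to d,\qquad \frac{|\alpha-\varphi(r\xi)|}{1-r}\to d.
\]
Substituting these into $(\ast)$ gives
\[
2\sigma_\alpha(\{\xi\})=\lim_{r\to1^-}\frac{(1-r)\cdot 2d(1-r)}{d^2(1-r)^2}=\frac{2}{d}.
\]
So $\sigma_\alpha(\{\xi\})=1/d>0$, which is exactly the stated formula.

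Finally, for $\alpha'\neq\alpha$ the quantity $|\alpha'-\varphi(r\xi)|$ stays bounded away from $0$ while $1-|\varphi(r\xi)|\to0$. By $(\ast)$, $\sigma_{\alpha'}(\{\xi\})=0$, so the $\alpha$ in the statement is forced to be the boundary value.

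The main obstacle is the converse direction in Step 3. It depends on the full strength of Julia--Carathéodory, namely the radial convergence of both difference quotients to the same $d$, and not merely on finiteness of the $\liminf$. I would quote that theorem as is rather than reprove it. The rest is the dominated-convergence computation $(\ast)$.
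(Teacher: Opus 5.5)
Your proof is correct. The paper gives no argument of its own for this lemma; it simply imports it from Saksman's survey. Your route is the standard argument behind that cited result: isolate the atom via $\lim_{r\to1^-}(1-r)\int_{\mathbb T}\frac{1-r^2}{|\zeta-r\xi|^2}\,d\sigma_\alpha(\zeta)=2\sigma_\alpha(\{\xi\})$, then apply Julia--Carath\'eodory in each direction. It also never uses that $\varphi$ is inner.

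The only slip is the superfluous factor $(1-r)$ in the denominator of your displayed bound in Step~2. It is harmless, since $(\ast)$ already gives $|\alpha-\varphi(r\xi)|^2\le(1-r)/s$ for $r$ near $1$.
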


The finiteness of the angular derivative can also be characterized by the integrability of the Clark measure.

\begin{lem}[\cite{Saksman}, Lemma 3.4]
Let $\xi, \alpha \in \mathbb{T}$. Then $|\varphi'(\xi)| < \infty$ if and only if
\begin{equation}
    \sigma_{\varphi, \alpha}(\{\xi\}) > 0 \quad \text{or} \quad \int_{\mathbb{T}} \frac{1}{|\xi - \zeta|^2} \, d\sigma_{\varphi, \alpha}(\zeta) < \infty.
\end{equation}
\end{lem}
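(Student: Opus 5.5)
The plan is to derive both directions from the Poisson representation of the Clark measure and the Julia--Carath\'eodory theorem. Throughout, ``$|\varphi'(\xi)|<\infty$'' means that $\varphi$ has an angular derivative at $\xi$ in the sense above.

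\emph{Key identity.} Taking real parts in the Herglotz representation gives, for $z\in\mathbb D$,
\[
\frac{1-|\varphi(z)|^2}{|\alpha-\varphi(z)|^2}=\int_{\mathbb T}\frac{1-|z|^2}{|\zeta-z|^2}\,d\sigma_{\varphi,\alpha}(\zeta).
\]
Dividing by $1-|z|^2$ and setting $z=r\xi$, we get
\[
\frac{1-|\varphi(r\xi)|^2}{(1-r^2)\,|\alpha-\varphi(r\xi)|^2}=\int_{\mathbb T}\frac{d\sigma_{\varphi,\alpha}(\zeta)}{|\zeta-r\xi|^2}. \tag{$\ast$}
\]
We also need an elementary geometric fact. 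Since $1-r\le|\zeta-r\xi|$ and $|\zeta-\xi|\le|\zeta-r\xi|+(1-r)$, we have $|\zeta-r\xi|\ge\tfrac12|\zeta-\xi|$ for all $\zeta\in\mathbb T$ and $r\in(0,1)$. Moreover, $|\zeta-r\xi|\to|\zeta-\xi|$ pointwise as $r\to1^-$.

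\emph{($\Leftarrow$).} If $\sigma_{\varphi,\alpha}(\{\xi\})>0$, then Lemma \ref{lem:saksman1} directly gives the angular derivative at $\xi$.

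Suppose instead that $\int|\xi-\zeta|^{-2}\,d\sigma_{\varphi,\alpha}<\infty$. By the geometric bound, the right side of $(\ast)$ is at most $4\int|\xi-\zeta|^{-2}\,d\sigma_{\varphi,\alpha}=:M$, uniformly in $r$. Since $|\alpha-\varphi|\le2$, this yields
\[
\frac{1-|\varphi(r\xi)|}{1-r}\le\frac{(1+r)\,4M}{1+|\varphi(r\xi)|}\le 8M .
\]
Hence $\liminf_{z\to\xi}(1-|\varphi(z)|)/(1-|z|)<\infty$, and the Julia--Carath\'eodory theorem gives that $\varphi$ has an angular derivative at $\xi$.

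\emph{($\Rightarrow$).} Assume $\varphi$ has an angular derivative at $\xi$, and let $\beta=\angle\lim_{z\to\xi}\varphi(z)\in\mathbb T$.
\begin{itemize}
\item If $\beta=\alpha$, Lemma \ref{lem:saksman1} says that $\sigma_{\varphi,\alpha}$ has an atom at $\xi$.
\item If $\beta\ne\alpha$, then $|\alpha-\varphi(r\xi)|\to|\alpha-\beta|>0$. By Julia--Carath\'eodory, $(1-|\varphi(r\xi)|^2)/(1-r^2)\to|\varphi'(\xi)|<\infty$. So the left side of $(\ast)$ converges to the finite limit $|\varphi'(\xi)|/|\alpha-\beta|^2$. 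Fatou's lemma, applied with the pointwise convergence $|\zeta-r\xi|^{-2}\to|\zeta-\xi|^{-2}$, then gives
\[
\int\frac{d\sigma_{\varphi,\alpha}(\zeta)}{|\xi-\zeta|^2}\le\frac{|\varphi'(\xi)|}{|\alpha-\beta|^2}<\infty.
\]
\end{itemize}

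\emph{Main point of care.} The only step that needs attention is the forward direction in the case $\beta\ne\alpha$. There one has to make sure the Julia--Carath\'eodory quotient actually converges along the radius, not just has a finite $\liminf$, so that Fatou applies to a bounded family. Everything else follows from $(\ast)$ and the geometric bound.
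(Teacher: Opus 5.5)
Your proof is correct. The paper gives no proof of this lemma; it is quoted from Saksman's survey. Your route is the standard one for this fact: take the real part of the Herglotz formula and divide by $1-|z|^2$, use the bound $|\zeta-r\xi|\ge\frac12|\zeta-\xi|$, then apply the Julia--Carath\'eodory theorem, Lemma~\ref{lem:saksman1} for the atom case, and Fatou's lemma.

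Your closing remark slightly misplaces the delicate point. Fatou's lemma needs only the nonnegativity of the integrands and a finite $\liminf_{r\to1}$ of the left side of $(\ast)$. It does not need the Julia--Carath\'eodory quotient to converge, nor the family to be bounded. What genuinely uses the hypothesis is that $|\alpha-\varphi(r\xi)|$ stays bounded away from $0$, and this is exactly where $\beta\neq\alpha$ enters.

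You can also push your argument one step further. Using the domination $|\zeta-r\xi|^{-2}\le 4|\zeta-\xi|^{-2}$ and dominated convergence, you get the exact identity $|\varphi'(\xi)|=|\alpha-\beta|^2\int_{\mathbb T}|\xi-\zeta|^{-2}\,d\sigma_{\varphi,\alpha}(\zeta)$ whenever $\beta\neq\alpha$.
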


Recall that for every fixed $\xi \in \bT$ we have that 
\[
\bP(\exists n \in \bN \text{ such that } \zeta_n=\xi)=0.
\]
Therefore, for every $\xi \in \bT$ we have
\[
\int_{\bT} \frac{d\mu(\zeta)}{|\xi-\zeta|^2} = \sum_{n \in \bN} \frac{c_n}{|\xi-\zeta_n|^2}.
\]
The remainder of this section is devoted to the study of the convergence of the random series above. To this end, we recall the following standard result (see, for instance, \cite{billingsley95}).

\begin{thm}[Kolmogorov's three series theorem] \label{kolmogorov 3-series}
Let $(X_n)_{n \in \bN}$ be a sequence of independent random variables and for any $A>0$ let $Y_n := X_n \chi_{\{|X_n| \leq A\}}$. Consider the series
    \begin{enumerate}[label=(\roman*)]
        \item $\sum_n \bP(|X_n| > A)$,
        \item $\sum_n \bE[Y_n]$,
        \item $\sum_n \Var[Y_n]$.
    \end{enumerate}
    In order that $\sum_n X_n$ converges almost surely, it is necessary that the three series converge for all positive $A$ and sufficient that they converge for some positive $A$.
\end{thm}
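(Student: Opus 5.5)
The statement is the classical Kolmogorov three series theorem, which the paper quotes from \cite{billingsley95}. If I had to supply a proof, I would split it into sufficiency and necessity. Both directions rest on two tools: Kolmogorov's maximal inequality and its reverse form for uniformly bounded centered variables.

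\emph{Sufficiency.} Fix $A>0$ for which the three series converge.
\begin{enumerate}
\item Series (i) converges, so the first Borel--Cantelli lemma gives $X_n=Y_n$ for all but finitely many $n$ almost surely. Hence it suffices to show that $\sum_n Y_n$ converges.
\item Write $Y_n=(Y_n-\mathbb E[Y_n])+\mathbb E[Y_n]$. Series (ii) converges, so it remains to show that $\sum_n (Y_n-\mathbb E[Y_n])$ converges almost surely.
\item The variables $Y_n-\mathbb E[Y_n]$ are independent and centered. Kolmogorov's maximal inequality gives
\[
\mathbb P\Bigl(\max_{m<k\le M}\Bigl|\sum_{n=m+1}^{k}(Y_n-\mathbb E[Y_n])\Bigr|>\eta\Bigr)\le \eta^{-2}\sum_{n>m}\operatorname{Var}[Y_n].
\]
Let $M\to\infty$ and then $m\to\infty$. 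The right side tends to $0$ by series (iii), so the partial sums are almost surely Cauchy.
\end{enumerate}

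\emph{Necessity.} Suppose $\sum_n X_n$ converges almost surely, and fix any $A>0$.
\begin{enumerate}
\item Convergence forces $X_n\to0$ almost surely, so $\mathbb P(|X_n|>A \text{ i.o.})=0$.
\item The events $\{|X_n|>A\}$ are independent. By the second Borel--Cantelli lemma, series (i) must converge.
\item By the same reasoning as in the sufficiency part, $\sum_n Y_n$ then converges almost surely.
\item Symmetrize: take an independent copy $(Y_n')$ and set $Z_n=Y_n-Y_n'$. The $Z_n$ are independent, centered, bounded by $2A$, and satisfy $\operatorname{Var}[Z_n]=2\operatorname{Var}[Y_n]$. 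Moreover $\sum_n Z_n$ converges almost surely.
\end{enumerate}

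The key step, and the one I expect to be the main obstacle, is to prove $\sum_n\operatorname{Var}[Z_n]<\infty$. I would use the reverse Kolmogorov inequality for independent centered variables bounded by $2A$:
\[
\mathbb P\Bigl(\max_{k\le N}\Bigl|\sum_{n=1}^{k}Z_n\Bigr|\le c\Bigr)\le \frac{(2A+c)^2}{\sum_{n\le N}\operatorname{Var}[Z_n]}.
\]
I would prove it by stopping at the first exit time from $[-c,c]$ and comparing second moments of the stopped sum.

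Suppose the variances summed to infinity. Then for every $c$ the partial sums would almost surely leave $[-c,c]$, so they would be unbounded. This contradicts almost sure convergence, so series (iii) converges.

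Finally, applying the sufficiency argument to the centered variables shows that $\sum_n (Y_n-\mathbb E[Y_n])$ converges almost surely. Subtracting this from the convergent series $\sum_n Y_n$ shows that the deterministic series (ii) converges. This completes the necessity for every $A>0$.
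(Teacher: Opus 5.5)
The paper does not prove this statement. It quotes the theorem from Billingsley as a standard tool, so there is no in-paper argument to compare against. Your proposal is a correct, self-contained version of the classical proof. Sufficiency uses the first Borel--Cantelli lemma and Kolmogorov's maximal inequality. Necessity uses the second Borel--Cantelli lemma, symmetrization, and the reverse maximal inequality $\mathbb P\bigl(\max_{k\le N}|S_k|\le c\bigr)\le (2A+c)^2/\sum_{n\le N}\operatorname{Var}[Z_n]$. Your optional-stopping sketch of that inequality works. The stopped sum satisfies $|S_\tau|\le c+2A$ and $\mathbb E[S_\tau^2]=\mathbb E\bigl[\sum_{n\le\tau}\operatorname{Var}[Z_n]\bigr]$, and the right-hand side is at least $\mathbb P(\text{no exit})\sum_{n\le N}\operatorname{Var}[Z_n]$.

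A common alternative for the variance condition in the necessity part is the Lindeberg central limit theorem. That is shorter once the CLT is available, but less elementary than your route.

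One observation concerns how the paper uses the theorem in the proof of Theorem~\ref{THM2}(ii). What is needed there is $\mathbb P\bigl(\sum_n X_n<\infty\bigr)=0$, which is stronger than ``$\sum_n X_n$ does not converge almost surely''. The theorem as stated gives only the weaker conclusion unless one adds the Kolmogorov zero--one law. Your necessity steps 1--2 give the stronger conclusion directly: since $X_n\ge 0$ and, by the second Borel--Cantelli lemma, $X_n>1$ infinitely often almost surely, the series diverges almost surely.
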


We can now prove our second result.
\begin{proof}[Proof of Theorem \ref{THM2}]

The following argument is an adaptation of the proof of \cite{HL25}*{Lemma 4.1}. Since the random sequence $(\zeta_n)_{n \in \bN}$ we are considering is invariant by rotations, we can assume without loss of generality that $\xi=1$. Set
   \[
   X_n = \frac{c_n}{|1-\zeta_n|^2},
   \]
   and note that the $X_n$ are independent. Since $|1-\zeta_n|^2 = 2 - 2\cos(\vartheta_n)$, we have
   \[
   \bP(X_n > 1) = \bP\Big( c_n > 2 - 2\cos(\vartheta_n) \Big) = \bP\Big( \cos(\vartheta_n) > 1 - \tfrac{c_n}{2} \Big).
   \]
   Writing $u_n = 1 - \tfrac{c_n}{2}$ and using that $\vartheta_n$ is uniform on $[0, 2\pi)$, this probability splits over the two arcs where $\cos(\vartheta_n) > u_n$, giving
   \[
   \bP(X_n > 1) = \bP\big(0 < \vartheta_n < \arccos(u_n)\big) + \bP\big(2\pi - \arccos(u_n) < \vartheta_n < 2\pi\big) = \frac{\arccos(u_n)}{\pi}.
   \]
   As $n \to \infty$ we have $u_n \to 1$, so that $\arccos(u_n) \sim (1 - u_n^2)^{1/2}$. Since moreover $1 - u_n^2 \sim c_n$, we conclude
   \[
   \bP(X_n > 1) \sim \frac{c_n^{1/2}}{\pi}.
   \]
   In particular, $\sum_n c_n^{1/2} = \infty$ forces $\sum_{n \in \bN} \bP(X_n > 1) = \infty$, and Theorem \ref{kolmogorov 3-series} then yields $$\bP\big(\sum_n X_n < \infty\big) = 0.$$ This proves the second part of the claim.

\medskip
Suppose now that $\sum_n c_n^{1/2} < \infty$. This condition immediately gives $\sum_{n \in \bN} \bP(X_n > 1) < \infty$, which is precisely condition (i) of Theorem \ref{kolmogorov 3-series}. It remains to verify conditions (ii) and (iii) for the truncated variables
\[
Y_n = \frac{c_n}{|1-\zeta_n|^2}\, \chi_{\{X_n \leq 1\}}.
\]
We first estimate $\bE[Y_n]$. As $\vartheta \mapsto |1-e^{i\vartheta}|^2$ is even, the integral over the relevant arc reduces to
\begin{align}\label{Cond(ii)}
    \bE[Y_n] & = \frac{1}{2\pi}\int_{\arccos(u_n)}^{2\pi-\arccos(u_n)} \frac{c_n}{|1-e^{i\vartheta}|^2}\, d\vartheta 
     = \frac{1}{\pi} \int_{\arccos(u_n)}^{\pi} \frac{c_n}{|1-e^{i\vartheta}|^{2}}\, d\vartheta \nonumber\\
     & = \frac{c_n}{2\pi} \int_{\arccos(u_n)}^\pi \frac{d\vartheta}{1-\cos(\vartheta)}.
\end{align}
An easy computation shows that
\[
\dfrac{d}{d\vartheta} \left( -\frac{\cos(\vartheta/2)}{\sin(\vartheta/2)} \right) = \frac{1}{1-\cos(\vartheta)}.
\]
We can then evaluate the last integral explicitly, leading to
\[
\bE[Y_n] = \frac{c_n}{2\pi}\, \frac{\cos(\arccos(u_n)/2)}{\sin(\arccos(u_n)/2)}.
\]
Recall that $\arccos(u_n) \sim c_n^{1/2}$ as $n \to \infty$. Consequently
\[
\sin(\arccos(u_n)/2) \sim \frac{c_n^{1/2}}{2} \quad \text{and} \quad \cos(\arccos(u_n)/2) \sim 1.
\]
Substituting we get
\[
\bE[Y_n] \simeq \frac{c_n}{2\pi}\, \frac{2}{c_n^{1/2}} = \frac{c_n^{1/2}}{\pi},
\]
and therefore
\[
\sum_{n \in \bN} \bE[Y_n] \simeq \sum_{n \in \bN} \frac{c_n^{1/2}}{\pi} < \infty,
\]
which is condition (ii). It remains to control $\Var[Y_n]$. Note that $Y_n \leq 1$, which implies $Y_n^2 \leq Y_n$. Hence
\[
\Var[Y_n] \leq \bE[Y_n]\big(1 - \bE[Y_n]\big) \simeq \bE[Y_n],
\]
and summing yields
\[
\sum_n \Var[Y_n] \leq \sum_n \bE[Y_n] < +\infty,
\]
which is condition (iii). All three conditions of Theorem \ref{kolmogorov 3-series} being satisfied, we conclude that
\[
\sum_{n \in \bN} X_n = \sum_{n \in \bN} \frac{c_n}{|1-\zeta_n|^{2}} < \infty \quad \text{almost surely.}
\]

\end{proof}

\begin{proof}[Proof of Corollary \ref{THM3}]
Suppose first that
\[
\sum_n c_n^{1/2}<\infty.
\]
Given a single realization $\omega$, let us call 
\[
E(\omega)=\{\zeta \in \bT \colon \varphi_\omega \text{ does not have a finite angular derivative at }\zeta\}.  
\]
By Theorem \ref{THM2}, for every fixed \(\xi\in\mathbb T\),
\[
\mathbb P(\xi\in E(\omega))=0.
\]
Hence, by Fubini,
\[
\mathbb E[m(E(\omega))]
=
\int_{\mathbb T}\mathbb P(\xi\in E(\omega))\,dm(\xi)
=0.
\]
Since \(m(E(\omega))\ge0\), it follows that \(m(E(\omega))=0\) almost surely. 
By the Aleksandrov's disintegration theorem \cite{Ale87},

\[
\int_{\mathbb T}\sigma_{\varphi,\alpha}(E(\omega))\,dm(\alpha) = m(E(\omega)) =0.
\]
Therefore,
\(
\sigma_{\varphi,\alpha}(E(\omega))=0
\)
for \(m\)-almost every \(\alpha\in\mathbb T\). For such \(\alpha\), the measure \(\sigma_{\varphi,\alpha}\) is carried by the set of
points where \(\varphi\) has a finite angular derivative. By the Clark atom
criterion \cite[Theorem 3.1]{Saksman}, the atoms of \(\sigma_{\varphi,\alpha}\) are precisely the points
\(
\xi\in\mathbb T
\)
such that
\[
\varphi(\xi)=\alpha
\qquad\text{and}\qquad
|\varphi'(\xi)|<\infty.
\]
Consequently, for \(m\)-almost every \(\alpha\), the Clark measure
\(\sigma_{\varphi,\alpha}\) is purely atomic, i.e. discrete.
Thus, almost surely, \(\sigma_{\varphi,\alpha}\) is discrete for
\(m\)-almost every \(\alpha\in\mathbb T\).

\medskip
For the second implication, we reason analogously.
\end{proof}

This result has implications on the spectrum of the unitary Clark operator $U_\alpha$.

\begin{lem}[\cite{Ross2013LENSLO}, Theorem 10.6]
The eigenvalues of the Clark operator $U_\alpha$ are the points $\zeta \in \mathbb{T}$ such that $\varphi(\zeta) = \alpha$ and $|\varphi'(\zeta)| < \infty$. In particular, the eigenvalues of $U_\alpha$ correspond exactly to the atoms of the Clark measure $\sigma_{\varphi, \alpha}$.
\end{lem}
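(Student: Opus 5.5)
This lemma is a restatement of the standard Clark theory for model spaces, so I would give a short argument through the spectral theorem rather than a new computation. The plan is to identify $U_\alpha$, up to unitary equivalence, with multiplication by the independent variable $\zeta$ on $L^2(\sigma_{\varphi,\alpha})$. This is done with the normalized Cauchy transform (Clark's unitary operator) $V_\alpha : L^2(\sigma_{\varphi,\alpha}) \to K_\varphi$, $V_\alpha f(z) = (1-\overline{\alpha}\varphi(z)) \int_{\mathbb T} \frac{f(\zeta)}{1-\overline{\zeta} z}\, d\sigma_{\varphi,\alpha}(\zeta)$. I would take from Clark \cite{Clark72} (as already recalled in the introduction) that $V_\alpha$ is unitary and that $V_\alpha^* U_\alpha V_\alpha = M_\zeta$, the operator of multiplication by $\zeta$. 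Once this is in place, the point spectrum of $U_\alpha$ equals the point spectrum of $M_\zeta$ on $L^2(\sigma_{\varphi,\alpha})$.

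The second step is the elementary description of eigenvalues of a multiplication operator. $M_\zeta f = \lambda f$ forces $(\zeta-\lambda) f(\zeta)=0$ for $\sigma_{\varphi,\alpha}$-a.e.\ $\zeta$. Hence $f$ is supported on $\{\lambda\}$, and a nonzero such $f$ exists if and only if $\sigma_{\varphi,\alpha}(\{\lambda\})>0$; the eigenvector is then $\chi_{\{\lambda\}}$. So the eigenvalues of $U_\alpha$ are exactly the atoms of $\sigma_{\varphi,\alpha}$, which is the ``in particular'' clause.

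Finally I would translate atoms into boundary data using Lemma \ref{lem:saksman1}. The measure $\sigma_{\varphi,\alpha}$ has an atom at $\xi$ if and only if $\varphi$ has an angular derivative at $\xi$ with $\angle\lim_{z\to\xi}\varphi(z)=\alpha$, that is, $\varphi(\xi)=\alpha$ and $|\varphi'(\xi)|<\infty$. As a consistency check, the eigenvector $V_\alpha\chi_{\{\xi\}}$ is a multiple of the reproducing kernel $\frac{1-\overline{\alpha}\varphi(z)}{1-\overline{\xi}z}$. This kernel lies in $K_\varphi\subset H^2$ exactly when the angular derivative is finite, by the Ahern--Clark criterion.

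The only delicate point is the intertwining identity $U_\alpha V_\alpha = V_\alpha M_\zeta$ for this particular normalization of $U_\alpha$. The normalization matters because $\varphi(0)=0$ in our model, since $H\mu(0)=1$. I would verify the identity on the dense set of functions $f=\frac{1}{1-\overline{w}\zeta}$, $w\in\mathbb D$, using the Herglotz formula defining $\sigma_{\varphi,\alpha}$, but mostly cite \cite{Clark72} and \cite{Ross2013LENSLO} for it.
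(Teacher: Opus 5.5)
Your proposal is correct and follows the same route the paper relies on when it cites \cite{Ross2013LENSLO} and \cite{Clark72}. You use Clark's unitary equivalence of $U_\alpha$ with multiplication by $\zeta$ on $L^2(\sigma_{\varphi,\alpha})$ (the paper's remark that $\sigma_{\varphi,\alpha}$ is the spectral measure of $U_\alpha$), identify eigenvalues of that multiplication operator with atoms, and then apply Lemma \ref{lem:saksman1} to turn atoms into the conditions $\varphi(\zeta)=\alpha$, $|\varphi'(\zeta)|<\infty$. Your normalization of $V_\alpha$ also matches the paper's $U_\alpha$ when $\varphi(0)=0$: one checks $V_\alpha 1=1$, and for $\varphi(z)=z$ one gets $U_\alpha=\alpha$ and $\sigma_{\varphi,\alpha}=\delta_\alpha$, so the intertwining identity you plan to cite holds as stated.
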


The measure $\sigma_\alpha$ is the spectral measure of the Clark operator $U_\alpha$. The problem of understanding the properties of the Clark measure $\sigma_\beta$ given the $\sigma_\alpha$, is equivalent to the analysis of the interaction between different unitary Clark extensions $U_\alpha$ and $U_\beta$ for $\alpha \neq \beta$.  
According to Corollary \ref{THM3}, if $\sum_n c_n^{1/2}<\infty$, almost every unitary extension of the random compressed shift $S_{\varphi}$ have discrete spectral measure almost surely. On the other hand, if $\sum_n c_n^{1/2}=\infty$, then almost every unitary extension have singular continuous measure almost surely. Similar results for deterministic Clark measures have been obtained by Donoghue \cite{Don65}, and Aleksandrov \cite{Ale87}.

\section{Proof of Theorem \ref{THM4}}\label{sect:thm4}

In order to prove Theorem \ref{THM4} we use the following result, proved in \cite{Huangsaksman}*{Lemma 16}

\begin{lem}\label{lem:saksman}
    Let $\Lambda=(\lambda_k)_{k \in \bN}$ be the zeros of the Blaschke product $\varphi: \bD \to \bD$ and let $0 < \beta < 1$. Then
    \[
    \sum_{k \in \bN} (1-|\lambda_k|)^\beta < \infty \quad \iff \quad \int_{\bD} (1-|z|^2)^{\beta-2} \log \frac{1}{|\varphi(z)|} dA(z) < \infty.
    \]
\end{lem}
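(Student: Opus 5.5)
The plan is to reduce the statement to a single zero, using the additivity of $\log\frac{1}{|\varphi|}$ over the Blaschke factors. Write $\varphi=e^{ic}\prod_k b_{\lambda_k}$, with each $|b_{\lambda_k}(z)|=\left|\frac{\lambda_k-z}{1-\overline{\lambda_k}z}\right|$ (the factor $z^m$ is the case $\lambda=0$). Then
\[
\log\frac{1}{|\varphi(z)|}=\sum_{k}\log\frac{1}{|b_{\lambda_k}(z)|},
\]
and every term is non-negative. By Tonelli's theorem,
\[
\int_{\mathbb D}(1-|z|^2)^{\beta-2}\log\frac{1}{|\varphi(z)|}\,dA(z)=\sum_k I(\lambda_k),
\]
where
\[
I(\lambda)=\int_{\mathbb D}(1-|z|^2)^{\beta-2}\log\frac{1}{|b_\lambda(z)|}\,dA(z).
\]
It therefore suffices to prove that $I(\lambda)\simeq(1-|\lambda|^2)^\beta$, with constants depending only on $\beta$ and not on $\lambda$. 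Both implications then follow at once, since $1-|\lambda|^2\simeq 1-|\lambda|$.

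To compute $I(\lambda)$, I will change variables by the involutive automorphism $z=\phi_\lambda(w)=\frac{\lambda-w}{1-\bar\lambda w}$, so that $|b_\lambda(z)|=|w|$. Recall the identities
\[
1-|z|^2=\frac{(1-|\lambda|^2)(1-|w|^2)}{|1-\bar\lambda w|^2},\qquad dA(z)=\frac{(1-|\lambda|^2)^2}{|1-\bar\lambda w|^4}\,dA(w).
\]
Substituting these gives
\[
I(\lambda)=(1-|\lambda|^2)^{\beta}\,J(\lambda),\qquad J(\lambda)=\int_{\mathbb D}\frac{(1-|w|^2)^{\beta-2}}{|1-\bar\lambda w|^{2\beta}}\log\frac{1}{|w|}\,dA(w).
\]

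The remaining task is to show $J(\lambda)\simeq 1$ uniformly in $\lambda\in\mathbb D$. For the lower bound, restrict the integral to $|w|<1/2$. There $|1-\bar\lambda w|\le 2$, $(1-|w|^2)^{\beta-2}\ge 1$, and $\log\frac1{|w|}\ge\log 2$, so $J(\lambda)\gtrsim 1$.

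For the upper bound, split $\mathbb D$ at $|w|=1/2$.
\begin{itemize}
\item On $|w|<1/2$, the integrand is bounded by a constant times $\log\frac1{|w|}$, which is integrable.
\item On $|w|\ge 1/2$, use $\log\frac1{|w|}\lesssim 1-|w|^2$. The integral is then dominated by
\[
\int_{\mathbb D}\frac{(1-|w|^2)^{t}}{|1-\bar\lambda w|^{2+t+s}}\,dA(w),\qquad t=\beta-1,\quad s=\beta-1.
\]
Here $t>-1$ because $\beta>0$, and $s<0$ because $\beta<1$. By the standard Forelli--Rudin estimates, this integral is bounded uniformly in $\lambda$ when $s<0$.
\end{itemize}
Hence $J(\lambda)\simeq 1$, and the lemma follows.

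The only delicate step is this uniform bound on $J$. It is exactly where the hypothesis $0<\beta<1$ is used: $\beta>0$ makes the weight integrable at the boundary, and $\beta<1$ prevents a blow-up as $|\lambda|\to1$. If one prefers to avoid quoting Forelli--Rudin, the bound can also be checked directly. Write $w=\rho e^{i\theta}$, integrate in $\theta$ using $\int_0^{2\pi}|1-\bar\lambda\rho e^{i\theta}|^{-2\beta}\,d\theta\lesssim 1$ (valid because $2\beta<2$, which makes $|1-e^{i\theta}|^{-2\beta}$ integrable), and then note that $\int_0^1(1-\rho)^{\beta-1}\,d\rho<\infty$.
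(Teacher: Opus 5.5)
The paper does not prove this lemma; it imports it from \cite{Huangsaksman}, so there is no internal argument to compare against. Your argument is a correct, self-contained proof. Additivity of $\log\frac{1}{|\varphi|}$ over the Blaschke factors, together with Tonelli, reduces everything to a single zero. The M\"obius substitution then gives the exact identity $I(\lambda)=(1-|\lambda|^2)^{\beta}J(\lambda)$. Finally, the Forelli--Rudin estimate with $t=s=\beta-1$ gives $J(\lambda)\simeq 1$ uniformly in $\lambda$; this is the area analogue of the arc-length estimate the paper quotes from \cite{KHZ} in Lemma~\ref{lem:Poisson_kernel}. You actually prove slightly more than the lemma: the weighted integral and $\sum_k(1-|\lambda_k|)^\beta$ are comparable, with constants depending only on $\beta$.

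One correction concerns your optional ``direct check'' at the end. The bound $\int_0^{2\pi}|1-\bar\lambda\rho e^{i\theta}|^{-2\beta}\,d\theta\lesssim 1$, uniformly in $\lambda$ and $\rho$, is false for $\beta\ge 1/2$. Near $\theta=0$ one has $|1-e^{i\theta}|^{-2\beta}\simeq|\theta|^{-2\beta}$, which is integrable on the circle only when $2\beta<1$. The threshold $2\beta<2$ you invoked is the one for area integrals, not arc-length integrals. For $1/2<\beta<1$ the angular integral is $\simeq(1-|\lambda|\rho)^{1-2\beta}$, and at $\beta=1/2$ it grows logarithmically.

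The elementary computation can still be repaired. Bound $(1-|\lambda|\rho)^{1-2\beta}\le(1-\rho)^{1-2\beta}$. The radial integral then becomes $\int_{1/2}^1(1-\rho)^{-\beta}\,d\rho$, with an extra harmless logarithm at $\beta=1/2$, and this is finite exactly because $\beta<1$. So in the hand computation the hypothesis $\beta<1$ is used in the radial integration, not in the angular one as your remark suggests.
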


We recall the following well-known estimate, see \cite[Theorem 1.7]{KHZ} for a proof.

\begin{lem}\label{lem:Poisson_kernel}
    Let $\alpha \in (0,1)$. Then,
    \[
    \frac{1}{2\pi}\int_{-\pi}^\pi \left(\frac{1-r^2}{1-2r\cos(\vartheta)+r^2}\right)^\alpha d\vartheta \simeq
    \begin{dcases}
        (1-r)^{1-\alpha} & \text{if} \quad 1/2 < \alpha < 1\\
        (1-r)^{1/2}\log\frac{1}{1-r} & \text{if} \quad \alpha = 1/2 \\
        (1-r)^{\alpha} & \text{if} \quad 0 < \alpha < 1/2.
    \end{dcases}
    \]
\end{lem}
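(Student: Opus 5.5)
We prove the two-sided estimate by reducing the Poisson kernel to an elementary comparison function and integrating it piece by piece.

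\emph{Reduction.} Write $P_r(\vartheta)=\frac{1-r^2}{1-2r\cos\vartheta+r^2}$ and use the identity
\[
1-2r\cos\vartheta+r^2=(1-r)^2+4r\sin^2(\vartheta/2).
\]
\begin{itemize}
\item For $r\le 1/2$ the denominator lies between $1/4$ and $9/4$, so $P_r\simeq 1$ uniformly. The integral is then $\simeq 1$, which is comparable to each of the three right-hand sides.
\item For $1/2\le r<1$, the bound $\frac{2}{\pi}|\vartheta|\le 2|\sin(\vartheta/2)|\le|\vartheta|$ on $[-\pi,\pi]$ gives $4r\sin^2(\vartheta/2)\simeq\vartheta^2$. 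Together with $1-r^2\simeq 1-r$ this yields
\[
P_r(\vartheta)\simeq \frac{1-r}{(1-r)^2+\vartheta^2}, \qquad |\vartheta|\le\pi,
\]
with absolute constants.
\end{itemize}
Since $t\mapsto t^\alpha$ is monotone, it suffices to estimate $\int_0^\pi \bigl(\frac{1-r}{(1-r)^2+\vartheta^2}\bigr)^\alpha d\vartheta$, using evenness of the integrand.

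\emph{Splitting.} Split $[0,\pi]$ at $\vartheta=1-r$.
\begin{itemize}
\item On $[0,1-r]$ the integrand is $\simeq (1-r)^{-\alpha}$. This piece contributes $\simeq (1-r)^{1-\alpha}$.
\item On $[1-r,\pi]$ the integrand is $\simeq (1-r)^\alpha\vartheta^{-2\alpha}$. This piece contributes $\simeq (1-r)^\alpha\int_{1-r}^{\pi}\vartheta^{-2\alpha}\,d\vartheta$.
\end{itemize}

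\emph{Computing the outer integral.} There are three cases according to the size of $2\alpha$.
\begin{itemize}
\item If $2\alpha>1$, the outer integral is $\simeq (1-r)^{1-2\alpha}$, so that piece is also $\simeq(1-r)^{1-\alpha}$, and the total is $\simeq(1-r)^{1-\alpha}$.
\item If $2\alpha=1$, the outer piece is $(1-r)^{1/2}\log\frac{\pi}{1-r}$. This dominates the inner piece $(1-r)^{1/2}$ and is $\simeq (1-r)^{1/2}\log\frac{1}{1-r}$ for $r$ close to $1$. For intermediate $r$, say $1/2\le r\le 1-\delta$, both sides are bounded above and below, so the comparison holds there too.
\item If $2\alpha<1$, the outer integral is $\simeq 1$ once $1-r\le\pi/2$, so that piece is $\simeq(1-r)^\alpha$. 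This dominates $(1-r)^{1-\alpha}$ because $\alpha<1-\alpha$.
\end{itemize}

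\emph{Main point of care.} The only delicate step is keeping the comparability constants uniform in $r$. This is guaranteed by the absolute constants in the reduction step and by treating $r\le 1/2$, and the intermediate range in the case $\alpha=1/2$, as bounded cases.
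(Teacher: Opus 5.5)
Your reduction to $\frac{1-r}{(1-r)^2+\vartheta^2}$ and the split at $\vartheta=1-r$ are correct. They give the two-sided estimate with absolute constants, uniformly for $1/2\le r<1$.

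The one false step is the claim that for $r\le 1/2$ the value $\simeq 1$ is comparable to each right-hand side. When $\alpha=1/2$, the function $(1-r)^{1/2}\log\frac{1}{1-r}$ vanishes at $r=0$ (it is $\sim r$ there), while the integral equals $1$ at $r=0$. So the lemma, read literally on all of $[0,1)$, is false for $\alpha=1/2$, and no argument can cover small $r$ in that case.

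You should say that the estimate is meant for $r\ge r_0>0$, i.e.\ as $r\to1^-$. This is how the cited source phrases it, and it is all that Lemma \ref{lem: expectation} uses. Alternatively, replace $\log\frac{1}{1-r}$ by $\log\frac{e}{1-r}$. With either fix your argument is complete. Your separate treatment of an intermediate range is unnecessary: for $r\ge 1/2$ one has $\log\frac{1}{1-r}\ge\log 2$, so $\log\frac{\pi}{1-r}\simeq\log\frac{1}{1-r}$ on all of $[1/2,1)$.

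Your route also differs from the paper's. The paper gives no argument and instead quotes the Forelli--Rudin estimates \cite[Theorem 1.7]{KHZ}. The integral equals $(1-r^2)^\alpha J(r)$, where $J(r)=\frac{1}{2\pi}\int_{-\pi}^{\pi}|1-re^{i\vartheta}|^{-2\alpha}\,d\vartheta$. That theorem says $J(r)$ behaves like $(1-r)^{1-2\alpha}$, like $\log\frac{1}{1-r}$, or stays bounded, according as $2\alpha>1$, $2\alpha=1$ or $2\alpha<1$.

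The standard proof of that theorem expands $(1-re^{i\vartheta})^{-\alpha}$ in its binomial series and applies Parseval to get $J(r)=\sum_n\bigl(\frac{\Gamma(n+\alpha)}{n!\,\Gamma(\alpha)}\bigr)^2r^{2n}$. It then uses Stirling's formula, which yields exact asymptotic constants as $r\to1^-$. Your geometric comparison avoids series and Gamma-function asymptotics altogether. It gives explicit two-sided bounds, uniform in $r\in[1/2,1)$, which is exactly the information the paper needs.
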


\begin{lem} \label{lem: expectation}
    Let $\varphi$ be the random inner function defined as in \eqref{phi:def}. Suppose that there exists $\alpha \in [1/2,1)$ such that $(c_n)_{n \in \bN} \in \ell^\alpha$. Then
    \[
    \sup_{|z|=r}\bE \left[ \log \frac{1}{|\varphi(z)|} \right] \lesssim (1-r)^{1-\alpha-\varepsilon}.
    \]
\end{lem}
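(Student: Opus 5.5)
The plan is to bound $\log\frac{1}{|\varphi(z)|}$ pointwise by a power of the Poisson integral divided by a power of $|H\mu(z)-1|$, and then take expectations term by term. By rotational invariance it suffices to take $z=r\in[\delta,1)$. Write $H\mu(r)=P+iQ$, where
\[
P=\sum_n c_n P_r(\zeta_n),\qquad P_r(\zeta)=\frac{1-r^2}{|\zeta-r|^2}.
\]
Then $|H\mu+1|^2=|H\mu-1|^2+4P$, so
\[
\log\frac{1}{|\varphi(r)|}=\tfrac12\log\Bigl(1+\frac{4P}{|H\mu(r)-1|^2}\Bigr)\le C_\beta\,\frac{P^\beta}{|H\mu(r)-1|^{2\beta}}
\]
for any $\beta\in(0,1]$. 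I will take $\beta=\alpha+\varepsilon'<1$, which is allowed since $(c_n)\in\ell^\alpha\subset\ell^\beta$. Subadditivity of $t\mapsto t^\beta$ gives $P^\beta\le\sum_n c_n^\beta P_r(\zeta_n)^\beta$. Hence
\[
\mathbb E\Bigl[\log\tfrac1{|\varphi(r)|}\Bigr]\lesssim\sum_n c_n^\beta\,\mathbb E\Bigl[P_r(\zeta_n)^\beta\,|H\mu(r)-1|^{-2\beta}\Bigr].
\]

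For each fixed $n$ I will condition on $\zeta_n$. Write $H\mu(r)-1=c_nH_r(\zeta_n)+W_n-1$, where $H_r(\zeta)=\frac{\zeta+r}{\zeta-r}$ and $W_n=\sum_{m\neq n}c_mH_r(\zeta_m)$ is independent of $\zeta_n$. The key input is a uniform negative-moment bound
\[
\sup_{w_0\in\mathbb C}\mathbb E\bigl[|W_n-w_0|^{-2\beta}\bigr]\le C,
\]
with $C$ independent of $n$ and of $r\in[\delta,1)$. Granting this, the $n$-th term is at most $C c_n^\beta\int_{\mathbb T}P_r^\beta\,dm$. By Lemma \ref{lem:Poisson_kernel}, since $\beta>1/2$, this integral is $\simeq(1-r)^{1-\beta}=(1-r)^{1-\alpha-\varepsilon'}$. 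Summing over $n$ with $\sum c_n^\beta<\infty$ gives the claim. The case $\alpha=1/2$ is absorbed by the $\varepsilon$-loss, which avoids the logarithm in Lemma \ref{lem:Poisson_kernel}.

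The main obstacle is the negative-moment bound. Note that the one-dimensional anti-concentration used in the proof of Theorem \ref{THM1} is not enough. That argument gives $\mathbb P(|Q|\le\varepsilon)\lesssim\sqrt\varepsilon$, which only controls $\mathbb E|\,\cdot\,|^{-\gamma}$ for $\gamma<1/2$, whereas here $2\beta>1$. I will instead exploit that $|W_n-w_0|$ small forces both the real part and the imaginary part to be small. I will aim to show that the two-dimensional vector $(\Re W_n,\Im W_n)$ has a density bounded uniformly in $n$ and $r$. This would give $\mathbb P(|W_n-w_0|\le\varepsilon)\lesssim\varepsilon^2$, and hence finiteness of $\mathbb E|W_n-w_0|^{-\gamma}$ for every $\gamma<2$, which covers $\gamma=2\beta<2$.

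To obtain the bounded density, I will use the two-dimensional Fourier inversion bound $\|f\|_\infty\le\int_{\mathbb R^2}|\widehat f|$. Here the characteristic function is a product over $m\neq n$ of $\mathbb E\,e^{i(s\,c_m\Re H_r+t\,c_m\Im H_r)}$. I will keep only finitely many factors, say $m\in\{0,1,2,3\}\setminus\{n\}$. Each factor should be estimated by a two-dimensional van der Corput bound of order $|(s,t)|^{-1/2}$, in the spirit of Lemma \ref{Lem app. VDC}. Three such factors give integrable decay $|(s,t)|^{-3/2}$ in $\mathbb R^2$. I expect that proving this decay uniformly in the direction $(s,t)$ and in $r$ near $1$ will be the technically delicate part. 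The phase is $s\Re H_r+t\Im H_r$, which is a Poisson-plus-conjugate-Poisson kernel, and I would try to split $[-\pi,\pi]$ into boundedly many intervals as in Lemma \ref{Lem app. VDC}, on each of which the first or second derivative of the phase is bounded below. The contribution of $|(s,t)|\le1$ is trivially bounded.
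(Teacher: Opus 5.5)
\textbf{There is a genuine gap: the key input is false.} Everything up to and including the conditioning on $\zeta_n$ is correct. The problem is what you need next: no bound $\sup_{w_0}\mathbb{E}|W_n-w_0|^{-2\beta}\le C$ uniform in $r\in[\delta,1)$ can hold once $2\beta\ge 1$. A fortiori, $W_n$ has no two-dimensional density bounded uniformly in $r$.

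To see this, let $r\to1$. By your own estimate, $\mathbb{E}(\Re W_n)^\beta\lesssim(1-r)^{1-\beta}\sum_m c_m^\beta$, so $\Re W_n=\sum_{m\ne n}c_mP_r(\zeta_m)\to0$ in probability. Meanwhile $\Im W_n$ converges in law to $-\sum_{m\ne n}c_m\cot(\vartheta_m/2)$, a Cauchy variable of scale $1-c_n$. Hence any weak limit of the law of $W_n$ is carried by the imaginary axis, which is incompatible with a uniform bound on planar densities. Moreover, $w\mapsto|w|^{-2\beta}$ is lower semicontinuous, so Fatou's lemma gives
\[
\liminf_{r\to1}\mathbb{E}|W_n|^{-2\beta}\ \ge\ \mathbb{E}\Bigl|\sum_{m\ne n}c_m\cot(\vartheta_m/2)\Bigr|^{-2\beta}=\infty,
\]
because a Cauchy density is positive at $0$ and $2\beta>1$.

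The step that breaks is the one you flagged as delicate. In the direction $(s,0)$ the phase is $s\,c_mP_r(\vartheta)$. Since $P_r\to0$ uniformly on compact subsets of $\mathbb{T}\setminus\{1\}$, we get $\mathbb{E}\,e^{isc_mP_r(\zeta_m)}\to1$ for every fixed $s$, and decay only starts at $|s|\gtrsim(1-r)^{-1}$.

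An $r$-dependent substitute does not come cheaply either. We have $\Re W_n\ge(1-c_n)\frac{1-r}{1+r}$, and when $\sum_n c_n^{1/2}<\infty$ it is of exact order $1-r$ (compare the proof of Theorem~\ref{THM2}). So one should expect $\sup_{w_0}\mathbb{E}|W_n-w_0|^{-2\beta}\approx(1-r)^{1-2\beta}$. Feeding this into your sum gives only $(1-r)^{2-3\beta}$, which is weaker than the lemma as soon as $\alpha>1/2$.

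\textbf{Where the loss comes from.} The loss originates in the inequality $\log(1+x)\le C_\beta x^\beta$. Near $\{H\mu(r)=1\}$ the function $\log\frac1{|\varphi|}=\log\frac{|H\mu+1|}{|H\mu-1|}$ has only a logarithmic singularity. You replace it by the power $|H\mu-1|^{-2\beta}$ with $2\beta>1$, which is not integrable against a law concentrating near a line. Taking the supremum over $w_0$ then also throws away the fact that $|H\mu(r)-1|$ small forces $\Re H\mu(r)\approx1$, which is a rare event.

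\textbf{How the paper avoids this.} The paper uses $|H\mu(r)-1|\ge|\Re H\mu(r)-1|$ and works only with the one-dimensional law of $P=\Re H\mu(r)$. It writes the expectation as $\int_0^\infty\mathbb{P}(P\in I(u))\frac{du}{u+1}$. For $u\le A_r=(1-r)^{-(6-4\alpha)}$ it applies Markov's inequality with $\mathbb{E}P^\alpha$, losing only $\log A_r$. For $u>A_r$ it applies Esséen's inequality to $P$; the degenerate constant $(1-r)^{-1/2}$ is harmless there because $u$ is polynomially large.

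\textbf{A repair within your set-up.} Split according to whether $|H\mu(r)-1|\ge1/2$.
\begin{enumerate}[label=(\roman*)]
\item On $\{|H\mu(r)-1|\ge1/2\}$ one checks that $\log\frac1{|\varphi(r)|}\lesssim P^\beta$, whose expectation is $\lesssim(1-r)^{1-\beta}$.
\item On $\{|H\mu(r)-1|<1/2\}\subset\{P>1/2\}$ one has $\log\frac1{|\varphi(r)|}\lesssim1+\log^+\frac1{|\Im H\mu(r)|}$. Apply H\"older's inequality, together with the bounds on all moments of $\log^+\frac1{|\Im H\mu(r)|}$, uniform in $r$, from the proof of Theorem~\ref{THM1}, and with $\mathbb{P}(P>1/2)\lesssim\mathbb{E}P^\beta\lesssim(1-r)^{1-\beta}$.
\end{enumerate}
Together these give the bound $(1-r)^{(1-\beta)(1-1/q)}$ for every $q<\infty$. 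Choosing $\beta=\alpha+\varepsilon/2$ and $q$ large yields the claimed exponent $1-\alpha-\varepsilon$.
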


\begin{proof}
   By the rotational invariance of $\mu$, we may restrict our attention to the case $z = r \in (0,1)$. A direct computation then gives
   \[
   \bE \Big[ \log \frac{1}{|\varphi(z)|} \Big] \leq \bE \Big[ \log \big(1+ \frac{4\Re H\mu(z)}{(\Re H\mu(z)-1)^2}\big)\Big],
   \]
   where $\Re H\mu$ denotes the Poisson extension of the random measure $\mu$. Passing to the distribution function and changing variables, we obtain
   \begin{align*}
       \bE \Big[\log \big( 1+ \frac{\Re H\mu(r)}{(\Re H\mu(r)-1)^2} \big)\Big] & = \int_0^\infty \bP \Big(  \log \big( 1+ \frac{\Re H\mu(r)}{(\Re H\mu(r)-1)^2} \big) > t \Big) dt \\
       & = \int_0^\infty \bP\Big( \frac{\Re H\mu(r)}{(\Re H\mu(r)-1)^2} > u \Big) \frac{du}{u+1} \\
       & = \int_0^\infty \bP( \Re H\mu(r) \in I(u)) \frac{du}{u+1},
   \end{align*}
   where $I(u)$ is the interval
   \[
   I(u)= \Big( 1 + \frac{1}{2u}- \sqrt{\frac{1}{u} + \frac{1}{4u^2}}, \ 1 + \frac{1}{2u} + \sqrt{\frac{1}{u} + \frac{1}{4u^2}} \Big).
   \]
   Setting $A_r= (1-r)^{-(6-4\alpha)}$, we split the integral at this threshold:
   \begin{align*}
       \int_0^\infty \bP( \Re H\mu(r) \in I(u)) \frac{du}{u+1} & = \int_0^{A_r} \bP( \Re H\mu(r) \in I(u)) \frac{du}{u+1} + \int_{A_r}^\infty \bP( \Re H\mu(r) \in I(u)) \frac{du}{u+1}\\
       & = A+B,
   \end{align*}
   and we estimate the two integrals separately.

   We begin with $A$. Writing $l(u) = 1 + \frac{1}{2u}- \sqrt{\frac{1}{u} + \frac{1}{4u^2}}$ for the left endpoint of $I(u)$, Markov's inequality together with Lemma \ref{lem:Poisson_kernel} yields
   \begin{align*}
       \bP(\Re H\mu(r) > l(u)) & \leq \bP(\Re H\mu(r)^\alpha > l(u)^\alpha) \\
       & \leq l(u)^{-\alpha}\bE[\Re H\mu(r)^\alpha]\\
       & \leq l(u)^{-\alpha} \sum_{n \in \bN} c_n^\alpha \bE \Big[ \big( \frac{1-r^2}{1+r^2-2r\cos(\vartheta_n)} \big)^\alpha \Big]\\
       & \lesssim l(u)^{-\alpha} \psi(r),
   \end{align*}
   where
   \[
   \psi(r) = 
   \begin{dcases}
       (1-r)^{1-\alpha} & \text{if } 1/2 < \alpha < 1,\\
       (1-r)^{1/2}\log\frac{1}{1-r} & \text{if } \alpha = 1/2.
   \end{dcases}
   \]
   Since $l(u) \simeq \min(u,1)$, splitting the range of integration leads to
   \begin{align}\label{eq:A}
       A & = \Big(\int_0^{1-r} + \int_{1-r}^{1} + \int_1^{A_r}\Big) \bP(\Re H\mu(r) \in I(u)) \frac{du}{u+1} \nonumber\\
       & \lesssim (1-r) + \int_{1-r}^1 \frac{\psi(r)}{l(u)^\alpha} \frac{du}{u+1} + \int_{1}^{A_r} \frac{\psi(r)}{l(u)^\alpha} \frac{du}{u+1} \nonumber \\
       & \simeq (1-r) + \psi(r) + \psi(r) \ln (A_r) \nonumber\\
       & \lesssim (1-r)^{1-\alpha - \varepsilon} \quad \text{for every } \varepsilon >0.
   \end{align}
   For $B$, Esséen's inequality gives
   \begin{align*}
       \bP(\Re H\mu(r) \in I(u)) & = \bP\Big(\big|\Re H\mu(r)-1-\frac{1}{2u} \big| \leq \sqrt{\frac{1}{u}+\frac{1}{4u^2}}\Big) \\
       & \leq C \sqrt{\frac{1}{\sqrt{u}(1-r)}},
   \end{align*}
   and hence
   \begin{align}\label{eq:B}
	   B & \lesssim \int_{A_r}^\infty \frac{1}{\sqrt{(1-r)}u^{\frac{1}{4}}} \frac{du}{u+1} \nonumber \\
       & \simeq (1-r)^{-1/2} \int_{A_r}^\infty \frac{du}{u^{5/4}} \nonumber \\
       & = (1-r)^{1-\alpha}.
   \end{align}
   Combining \eqref{eq:A} and \eqref{eq:B} yields the result.
\end{proof}

Note that the above estimate does not add useful information for $\alpha < 1/2$.

\bigskip

We can now prove Theorem \ref{THM4}.

\begin{proof}[Proof of Theorem \ref{THM4}]
    Let $\varepsilon>0$ and $0<\varepsilon_1< \varepsilon$. Then from Lemma \ref{lem: expectation} we have that for every $z \in \bD$
    \[
    \bE \left[ \log \frac{1}{|\varphi(z)|} \right] \lesssim (1-|z|)^{1-\alpha-\varepsilon_1}.
    \]
    We get
    \begin{align*}
        \bE \Big[ \int_{\bD} (1-|z|^2)^{\alpha+\varepsilon-2} \log \frac{1}{|\varphi(z)|}dA(z) \Big]
        & \lesssim \int_{\bD} (1-|z|^2)^{\alpha+\varepsilon-2} (1-|z|)^{1-\alpha-\varepsilon} dA(z) \\
        & \simeq \int_{\bD} (1-|z|)^{-1+(\varepsilon-\varepsilon_1)} dA(z) < \infty.
    \end{align*}
    This implies that for every $\varepsilon>0$
    \[
    \int_{\bD} (1-|z|^2)^{\alpha+\varepsilon-2} \log \frac{1}{|\varphi(z)|}dA(z) < \infty \quad \text{a.s.}
    \]
    Using Lemma \ref{lem:saksman} we obtain the result.
\end{proof}

To prove Theorem \ref{Prop1}, we again use Lemma \ref{lem:saksman}. Establishing our final result requires the following lemma. We wish to emphasize that this result appears to be ``folklore'' in the context of multifractal analysis and random coverings. Nevertheless, we were unable to find a result covering our specific situation, and for this reason we provide a proof of the following result. We refer the reader to some related works on the subject \cite{BarralSeuret08, Falconer00}.

\begin{lem}\label{G_set}
Let $\mu$ be defined as in \eqref{mu:def}. Suppose that \(c_n\) is a decreasing sequence such that
\begin{equation}\tag{LR}
\lim_{n\to\infty}\frac{-\log c_n}{\log n}=p,
\end{equation}
with $p\geq 1$.
Then, almost surely,
\[
G=\left\{\vartheta\in[0,2\pi]:
\lim_{\delta\to0}
\frac{\log \mu(e^{i\vartheta}-\delta,e^{i\vartheta}+\delta)}
{\log\delta}
=p
\right\}
\]
has full Lebesgue measure.
\end{lem}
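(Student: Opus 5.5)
By rotation invariance and Fubini, it suffices to prove that for each fixed $\vartheta$ (say $\vartheta=0$, i.e.\ $\xi=1$) we have almost surely
\[
\lim_{\delta\to0}\frac{\log\mu(I_\delta)}{\log\delta}=p,
\qquad I_\delta=\{e^{it}:|t|<\delta\}.
\]
Indeed, $\{(\omega,\vartheta):\vartheta\notin G(\omega)\}$ is jointly measurable: one may restrict $\delta$ to rationals, since $\mu(I_\delta)$ is monotone in $\delta$. Then $\mathbb E[m(G^c)]=\int\mathbb P(\vartheta\notin G)\,dm(\vartheta)=0$, exactly as in the proofs of Proposition~\ref{Prop3.1} and Corollary~\ref{THM3}.

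By monotonicity of $\delta\mapsto\mu(I_\delta)$ it is enough to work along $\delta_k=2^{-k}$. For $2^{-k-1}\le\delta\le 2^{-k}$ we have $\mu(I_{\delta_{k+1}})\le\mu(I_\delta)\le\mu(I_{\delta_k})$, and $\log\delta_{k+1}/\log\delta_k\to1$. Fix small $\varepsilon,\eta>0$. By \eqref{log-regularity}, for large $n$,
\[
n^{-p-\varepsilon}\le c_n\le n^{-p+\varepsilon}.
\]
Also $\mathbb P(\zeta_n\in I_\delta)=\delta/\pi$.

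\emph{Lower bound on the mass.} Let $M=M_k=\lceil\delta_k^{-1-\eta}\rceil$. Since the $\zeta_n$ are independent,
\[
\mathbb P\bigl(\zeta_n\notin I_{\delta_k}\ \text{for all } n\le M\bigr)=(1-\delta_k/\pi)^M\le\exp\bigl(-\delta_k^{-\eta}/\pi\bigr),
\]
which is summable in $k$. By Borel--Cantelli, almost surely for all large $k$ some $n\le M$ has $\zeta_n\in I_{\delta_k}$. Since $c_n$ is decreasing, this gives
\[
\mu(I_{\delta_k})\ge c_M\ge M^{-p-\varepsilon}\gtrsim\delta_k^{(1+\eta)(p+\varepsilon)}.
\]
Hence $\limsup_k \log\mu(I_{\delta_k})/\log\delta_k\le(1+\eta)(p+\varepsilon)$.

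\emph{Upper bound on the mass.} Let $N=N_k=\lfloor\delta_k^{-1+\eta}\rfloor$ and split
\[
\mu(I_\delta)=\sum_{n\le N}c_n\chi_{\{\zeta_n\in I_\delta\}}+\sum_{n>N}c_n\chi_{\{\zeta_n\in I_\delta\}}.
\]
For the head, the probability that any $\zeta_n$ with $n\le N$ lies in $I_{\delta_k}$ is at most $N\delta_k/\pi\le\delta_k^{\eta}$. This is summable over dyadic $k$, so almost surely the head vanishes for all large $k$.

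For the tail, $\mathbb E\bigl[\sum_{n>N}c_n\chi_{\{\zeta_n\in I_\delta\}}\bigr]=\frac{\delta}{\pi}\sum_{n>N}c_n$. I split according to $p$.
\begin{itemize}
\item If $p>1$, take $\varepsilon<p-1$. Then $\sum_{n>N}c_n\lesssim N^{1-p+\varepsilon}$, so the expectation is $\lesssim\delta^{1+(1-\eta)(p-1-\varepsilon)}$. Markov's inequality bounds the probability that the tail exceeds $\delta_k^{p-\varepsilon'}$ by a positive power of $\delta_k$, provided $\varepsilon'$ is chosen larger than a fixed multiple of $\eta+\varepsilon$. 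This is again summable in $k$.
\item If $p=1$, I use only $\sum c_n\le1$. The expectation is then at most $\delta$, and Markov gives probability at most $\delta_k^{\varepsilon'}$.
\end{itemize}
In either case Borel--Cantelli yields $\mu(I_{\delta_k})\le\delta_k^{p-\varepsilon'}$ for all large $k$. Hence $\liminf_k\log\mu(I_{\delta_k})/\log\delta_k\ge p-\varepsilon'$.

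Finally, let $\varepsilon,\eta,\varepsilon'\to0$ along a countable sequence to get the limit $p$ almost surely for fixed $\vartheta$, and conclude by Fubini as above. The step I expect to need the most care is the exponent bookkeeping in the tail estimate. There $\eta$ must be chosen small relative to $\varepsilon'$ so that the Markov bound still carries a strictly positive power of $\delta_k$ and is therefore summable.
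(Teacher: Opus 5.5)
Your proof is correct and uses the same estimates as the paper: the dyadic reduction by monotonicity, the lower bound from the first roughly $\delta_k^{-1-\eta}$ atoms, and the upper bound obtained by discarding the first roughly $\delta_k^{-1+\eta}$ atoms and applying Markov's inequality to the tail (with $p=1$ handled by $\sum_n c_n\le 1$ alone). The only difference is the order of quantifiers: you run Borel--Cantelli in $\omega$ at a fixed point and then apply Fubini, whereas the paper fixes the realization and runs Borel--Cantelli in the spatial variable with respect to Lebesgue measure, which incidentally shows the upper bound holds for every configuration of the atoms, so randomness is needed only for the lower bound.
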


\begin{proof}
It is enough to work on $[0,1)$ and consider
\[
\mu=\sum_{n\ge1}c_n\delta_{X_n},
\]
where the $X_n$ are i.i.d. uniformly distributed on $[0,1)$. For $\delta>0$, set
\[
M_\delta(x)=\mu((x-\delta,x+\delta))
=\sum_{n\ge1}c_n\mathbb 1_{\{|X_n-x|<\delta\}}.
\]
We prove the claim first for dyadic radii $\delta_j=2^{-j}$.

Fix $0<\eta < 1$. We show that, almost surely, for $m-$a.e. $x$, for all large $j$,
\[
\delta_j^{p+\eta}\le M_{\delta_j}(x)\le \delta_j^{p-\eta}.
\]

We start with the lower bound. By \eqref{log-regularity}, we can choose $0<\gamma<\eta$
such that for all large $n$,
\[
c_n\ge n^{-(p+\gamma)}.
\]

Choose $\tau>0$ so small that $(1+\tau)(p+\gamma)<p+\eta$,
and set $N_j=\lfloor \delta_j^{-1-\tau}\rfloor$.
Hence, for all large $j\geq j_0$,
\[
c_{N_j}\ge N_j^{-(p+\gamma)}
\ge \delta_j^{(1+\tau)(p+\gamma)}
\ge \delta_j^{p+\eta}.
\]
Let
\[
E_j=\{x\in(0,1): |x-X_n|\ge \delta_j
\text{ for every }1\le n\le N_j\}.
\]
For $x\in(\delta_j,1-\delta_j)$,
\[
\mathbb P(x\in E_j)=(1-2\delta_j)^{N_j}
\le e^{-2\delta_jN_j}.
\]
Thus, by Fubini's theorem,
\[
\mathbb E[m(E_j)]
\le 2\delta_j+e^{-2\delta_jN_j}.
\]
Since $\delta_jN_j\simeq \delta_j^{-\tau}$, the right-hand side is summable in
$j$. Therefore
\[
\sum_j m(E_j)<\infty \quad \text{almost surely.}
\]
By Borel--Cantelli, for a.e. $x$, one has $x\notin E_j$ for all
large $j\geq j_1$. Hence, for such $x$, there exists $n_j\le N_j$ with
$|X_{n_j}-x|<\delta_j$, and therefore
\[
M_{\delta_j}(x)\ge c_{n_j} \ge c_{N_j}\ge N_j^{-(p+\gamma)}  \geq \delta_j^{p+\eta}
\]
for all large $j$.

\medskip
We now prove the upper bound. Assume first that $p>1$. Choose $0<\gamma<\eta$ such that $p-\gamma>1$ and $\beta>0$ such that
\[
\theta=\eta-\gamma-\beta(p-1-\gamma)>0.
\]
Let
\(
K_j=\lfloor \delta_j^{-1+\beta}\rfloor
\)
 be the integer part.
The set of points lying within distance $\delta_j$ of one of
$X_1,\dots,X_{K_j}$ has Lebesgue measure at most
\[
2\delta_jK_j\le C\delta_j^\beta.
\]
Since $\sum_j\delta_j^\beta<\infty$, Borel--Cantelli implies that, for a.e.
$x$, the first $K_j$ atoms do not contribute to $M_{\delta_j}(x)$ for all large
$j\geq j_0$.

It remains to estimate the tail
\[
T_j(x)=\sum_{n>K_j}c_n\mathbb 1_{\{|X_n-x|<\delta_j\}}.
\]
Clearly
\[
m\{x:T_j(x)>\delta_j^{p-\eta}\}
\le
\delta_j^{-(p-\eta)}
\int_0^1T_j(x)\,dx.
\]
Since
\[
\int_0^1\mathbb 1_{\{|X_n-x|<\delta_j\}}\,dx\le 2\delta_j,
\]
we have
\[
\int_0^1T_j(x)\,dx 
\le 2\delta_j\sum_{n>K_j}c_n.
\]
Using again \eqref{log-regularity}, $c_n\le n^{-(p-\gamma)}$ for all large $n$, we obtain
\[
\sum_{n>K_j}c_n\le C K_j^{1-p+\gamma}.
\]
Thus
\[
m\{x:T_j(x)>\delta_j^{p-\eta}\}
\le
C\delta_j^{1-p+\eta}K_j^{1-p+\gamma}
\le C\delta_j^\theta.
\]
Since $\theta>0$, the last expression is summable in $j$. Another
Borel--Cantelli argument gives
\[
T_j(x)\le \delta_j^{p-\eta}
\]
for a.e. $x$ and all large $j\geq j_1$. Since the first $K_j$ atoms eventually do not
contribute, we obtain
\[
M_{\delta_j}(x)\le \delta_j^{p-\eta}
\]
for a.e. $x$ and all large $j\geq \max (j_0,j_1)$.

\medskip
If $p=1$, the upper bound follows directly from $\sum_n c_n<\infty$. Indeed,
\[
\int_0^1M_{\delta_j}(x)\,dx 
\le 2\delta_j\sum_{n\ge1}c_n,
\]
and 
\[
m\{x:M_{\delta_j}(x)>\delta_j^{1-\eta}\}
\le C\delta_j^\eta.
\]
This is summable in $j$, so Borel--Cantelli yields
\[
M_{\delta_j}(x)\le \delta_j^{1-\eta}
\]
for a.e. $x$ and all large $j$.

\medskip
Combining the lower and upper bounds, we have shown that, for every
$\eta>0$, almost surely, for a.e. $x$,
\[
\delta_j^{p+\eta}\le M_{\delta_j}(x)\le \delta_j^{p-\eta}
\]
for all large $j$. 
Taking logarithms, dividing by $\log\delta_j<0$, letting $\eta \to 0$ through a countable sequence, we obtain
\[
\lim_{j\to\infty}
\frac{\log M_{\delta_j}(x)}{\log\delta_j}
=p
\]
for a.e. $x$, almost surely.
Finally, the result for general, non dyadic $\delta$ follows from the monotonicity of
$\delta\mapsto M_\delta(x)$.
\end{proof}

Using the above lemma, we now prove Theorem \ref{Prop1}.

\begin{proof}[Proof of Theorem \ref{Prop1}]
We proceed by distinguishing between two cases.

\medskip

\underline{\textit{Case $p>1$}}.
Fix \(0<\delta<p-1\). For \(\varepsilon_0>0\), define
\[
E_{p,\delta}(\varepsilon_0)
=
\left\{
\xi\in\mathbb T:
\varepsilon^{p+\delta}
\leq \mu(I(\xi,\varepsilon))
\leq \varepsilon^{p-\delta}
\text{ for every }0<\varepsilon\leq \varepsilon_0
\right\},
\]
where
\[
I(\xi,\varepsilon)=\{\zeta\in\mathbb T:|\zeta-\xi|<\varepsilon\}.
\]
By Lemma \ref{G_set}, for \(\varepsilon_0>0\) small enough,
\[
m(E_{p,\delta}(\varepsilon_0))>0 \quad \text{almost surely}.
\]
Indeed, 
\[
G=
\left\{
\xi\in\mathbb T:
\lim_{\varepsilon\to0}
\frac{\log\mu(I(\xi,\varepsilon))}{\log\varepsilon}
=p
\right\},
\]
has full Lebesgue measure a.s. and
\[
G\subseteq \bigcup_{k=1}^{\infty}E_{p,\delta}(1/k).
\]
Hence, for some sufficiently small
\(\varepsilon_0\), the set \(E_{p,\delta}(\varepsilon_0)\) has positive
Lebesgue measure a.s.
Set
\[
E_0:=E_{p,\delta}(\varepsilon_0),
\]
and fix a realization $\omega \in \Omega$ such that $m(E_0)>0$. Define the truncated Korenblum region
\[
E_1:=
\left\{
z=re^{i\theta}\in\mathbb D:
1-r<\varepsilon_0,\ 
\operatorname{dist}(e^{i\theta},E_0)\leq 1-r
\right\}.
\]
We prove that, for \(z\in E_1\),
\[
\Re H\mu(z)\gtrsim (1-|z|)^{p-1+\delta}
\]
and
\[
|H\mu(z)|\leq C_{\varepsilon_0}.
\]
Let \(z=re^{i\theta}\in E_1\), and put
\[
h=1-r.
\]
By definition of \(E_1\), there exists \(\xi\in E_0\) such that
\[
|e^{i\theta}-\xi|\leq h.
\]

We first prove the lower bound. If \(\zeta\in I(\xi,h)\), then
\[
|\zeta-z|
\leq
|\zeta-\xi|+|\xi-e^{i\theta}|+|e^{i\theta}-re^{i\theta}|
\leq 3h.
\]
Moreover since $1-|z|^2 \geq h$, we have
\[
\frac{1-|z|^2}{|\zeta-z|^2}
\geq
\frac{1}{9h},
\qquad \zeta\in I(\xi,h).
\]
Since \(\xi\in E_0\) and \(h<\varepsilon_0\), we obtain
\[
\begin{aligned}
\Re H\mu(z)
&=
\int_{\mathbb T}
\frac{1-|z|^2}{|\zeta-z|^2}\,d\mu(\zeta) \\
&\geq
\int_{I(\xi,h)}
\frac{1-|z|^2}{|\zeta-z|^2}\,d\mu(\zeta) \geq
\frac{\mu(I(\xi,h))}{9h}
\geq
\frac{1}{9}h^{p-1+\delta}.
\end{aligned}
\]
Thus
\[
\Re H\mu(z)\gtrsim (1-|z|)^{p-1+\delta}.
\]

We now prove the upper bound for \(H\mu\). Since
\[
|z-\xi|
\leq
|z-e^{i\theta}|+|e^{i\theta}-\xi|
\leq 2h,
\]
we have \(z\in\Gamma_2(\xi)\), i.e. the Stoltz region with vertex at $\xi$ and opening $2$. Hence for every \(\zeta\in\mathbb T\),
\[
|\zeta-z|\gtrsim h+|\zeta-\xi|.
\]
Therefore
\[
|H\mu(z)|
\leq
\int_{\mathbb T}\frac{|\zeta+z|}{|\zeta-z|}\,d\mu(\zeta)
\lesssim
\int_{\mathbb T}\frac{d\mu(\zeta)}{h+|\zeta-\xi|}.
\]
Choose \(N\geq0\) such that
\[
2^Nh\leq \varepsilon_0<2^{N+1}h.
\]
We decompose \(\mathbb T\) into dyadic annuli centered at \(\xi\). First,
\[
\int_{I(\xi,h)}
\frac{d\mu(\zeta)}{h+|\zeta-\xi|}
\leq
\frac{\mu(I(\xi,h))}{h}
\leq
h^{p-1-\delta}.
\]
For \(1\leq k\leq N\), put
\[
A_k=I(\xi,2^kh)\setminus I(\xi,2^{k-1}h).
\]
Then
\[
\begin{aligned}
\int_{A_k}
\frac{d\mu(\zeta)}{h+|\zeta-\xi|}
&\leq
\frac{\mu(I(\xi,2^kh))}{2^{k-1}h} \\
&\lesssim
\frac{(2^kh)^{p-\delta}}{2^kh} =
2^{k(p-1-\delta)}h^{p-1-\delta}.
\end{aligned}
\]
Since \(p-1-\delta>0\), we get
\[
\begin{aligned}
\sum_{k=0}^{N}
\int_{A_k}
\frac{d\mu(\zeta)}{h+|\zeta-\xi|}
&\lesssim h^{p-1-\delta}
\sum_{k=0}^{N}2^{k(p-1-\delta)} \\
&\lesssim
 h^{p-1-\delta}2^{N(p-1-\delta)} \\
&= (2^Nh)^{p-1-\delta}
\leq
\varepsilon_0^{p-1-\delta}.
\end{aligned}
\]
It remains to estimate the part outside \(I(\xi,2^Nh)\). Since
\[
2^Nh>\frac{\varepsilon_0}{2},
\]
we have
\[
h+|\zeta-\xi|\geq |\zeta-\xi|\geq \frac{\varepsilon_0}{2}
\]
on \(\mathbb T\setminus I(\xi,2^Nh)\). Hence
\[
\int_{\mathbb T\setminus I(\xi,2^Nh)}
\frac{d\mu(\zeta)}{h+|\zeta-\xi|}
\leq
\frac{2}{\varepsilon_0}.
\]
Combining the estimates gives
\[
|H\mu(z)|
\lesssim
\left(
\varepsilon_0^{p-1-\delta}
+
\varepsilon_0^{-1}
\right).
\]
The right-hand side is independent of \(z\in E_1\). Therefore \(H\mu\)
is bounded on \(E_1\).

To conclude the proof, we argue as in \cite{Huangsaksman}. On
\(E_1\) we have
\[
\Re H\mu(z)\gtrsim (1-|z|)^{p-1+\delta}
\]
and
\[
|H\mu(z)|\leq C_{\varepsilon_0}.
\]
Hence
\[
(\Re H\mu(z)-1)^2+(\Im H\mu(z))^2
\leq C
\]
on \(E_1\). Therefore for $z \in E_1$
\[
\log\frac1{|\varphi(z)|}
=
\frac12
\log\left(
1+\frac{4\Re H\mu(z)}
{(\Re H\mu(z)-1)^2+(\Im H\mu(z))^2}
\right)
\gtrsim \Re H\mu(z) \gtrsim (1-|z|)^{p-1+\delta}
\]
It follows that
\[
\begin{aligned}
\int_{\mathbb D}
(1-|z|)^{-p-\varepsilon}
\log\frac1{|\varphi(z)|}\,dA(z)
&\geq
\int_{E_1}
(1-|z|)^{-p-\varepsilon}
\log\frac1{|\varphi(z)|}\,dA(z) \\
&\gtrsim
\int_{E_1}
(1-|z|)^{-1-\varepsilon+\delta}\,dA(z).
\end{aligned}
\]
Since \(E_0\) has positive Lebesgue measure and
\[
\{re^{i\theta}:e^{i\theta}\in E_0,\ 1-r<\varepsilon_0\}
\subset E_1,
\]
we obtain
\[
\int_{E_1}
(1-|z|)^{-1-\varepsilon+\delta}\,dA(z)
\geq
m(E_0)
\int_{1-\varepsilon_0}^{1}
(1-r)^{-1-\varepsilon+\delta}\,r\,dr.
\]
The last integral diverges whenever \(\delta<\varepsilon\). Therefore
\[
\int_{\mathbb D}
(1-|z|)^{-p-\varepsilon}
\log\frac1{|\varphi(z)|}\,dA(z)
=+\infty.
\]
By Lemma \ref{lem:saksman}, this implies
\[
\sum_k (1-|\lambda_k|)^{2-p-\varepsilon}
=
+\infty.
\]
Since what we have showed holds for a.e. $\omega \in \Omega$, we proved the theorem.

\medskip

\underline{\textit{Case $p=1$}}. Fix \(\varepsilon>0\) and choose
\(
0<\eta<{\varepsilon}/{3}
\). 
We work as in the case $p> 1$. We consider $z \in E_1$.
As before, the lower estimate for the Poisson integral gives
\[
\Re H\mu(z)
=
\int_{\mathbb T}
\frac{1-|z|^2}{|\zeta-z|^2}\,d\mu(\zeta)
\gtrsim h^\eta.
\]
The estimate of \(H\mu\) from above is slightly different, but reasoning as in the case $p>1$, we get that 
\[
|H\mu(z)|\lesssim h^{-\eta},
\qquad z\in E_1.
\]
Since on \(E_1\),
\[
\Re H\mu(z)\gtrsim h^\eta,
\]
then
\[
\log\frac1{|\varphi(z)|}
=
\frac12
\log\left(
1+
\frac{4 \Re H\mu(z)}
{(\Re H \mu(z)-1)^2+\Im H \mu(z)^2}
\right) \gtrsim h^{3\eta}.
\]
It follows that
\[
\begin{aligned}
\int_{\mathbb D}
(1-|z|)^{-1-\varepsilon}
\log\frac1{|\varphi(z)|}\,dA(z)
&\geq
\int_{E_1}
(1-|z|)^{-1-\varepsilon}
\log\frac1{|\varphi(z)|}\,dA(z) \\
&\gtrsim
\int_{E_1}
(1-|z|)^{-1-\varepsilon+3\eta}\,dA(z)\\
&\geq
m(E_0)
\int_{1-\varepsilon_0}^{1}
(1-r)^{-1-\varepsilon+3\eta}r\,dr.
\end{aligned}
\]
The last integral diverges because
\(
3\eta<\varepsilon
\).
By Lemma \ref{lem:saksman}, it follows that
\[
\sum_k (1-|\lambda_k|)^{1-\varepsilon}=\infty.
\]
Once again, since everything holds for a.e. $\omega \in \Omega$, we get the result.
    
\end{proof}

\section*{Methodology}
In a private communication, Professor S. Seuret informed us that Lemma \ref{G_set} is a folklore result. We have not been able to locate a precise reference for the claim and the proof given in the article has been obtained with the help of ChatGPT 5.5. Furthermore the same large language model has been used for proof-reading of the article and for bibliographical research. 
All the authors of the present paper adhere to the principles of the ``Leiden Declaration on Artificial Intelligence and Mathematics''.

\section*{Acknowledgements}
We would like to thank Professor S. Seuret for useful suggestions regarding the local H\"older exponent of random measures.

\bibliographystyle{habbrv}
\bibliography{biblio}

\end{document}